\documentclass[11pt,a4paper,reqno]{amsart}

\usepackage [english]{babel}  %sillabazione in inglese

\DeclareFontFamily{U}{mathb}{\hyphenchar\font45}
\DeclareFontShape{U}{mathb}{m}{n}{
      <5> <6> <7> <8> <9> <10>
      <10.95> <12> <14.4> <17.28> <20.74> <24.88>
      mathb10
      }{}
\DeclareSymbolFont{mathb}{U}{mathb}{m}{n}

\DeclareMathSymbol{\sqbullet}{1}{mathb}{"0D}

\usepackage{amsmath,amssymb}
\usepackage{mathtools} % Per fare \bigcup con due righe più allineato
\usepackage{enumitem}
\usepackage{euscript}
\usepackage{graphicx}
\usepackage[all]{xy}
\usepackage[dvipsnames]{xcolor}
\usepackage[colorlinks=true,linktocpage=true,pagebackref=false, citecolor=black,linkcolor=black]{hyperref} %Cambiare false con true per vedere a che pagina sono citate le referenze
\usepackage{pifont}
\usepackage{tikz,pgfplots}
\usetikzlibrary{calc}
\usepackage{pgffor}
\usepackage{enumitem} %%%%%%%% Pacchetto per enumerate

\pgfplotsset{compat=1.17}
\usepgfplotslibrary{fillbetween}
\usetikzlibrary{cd,arrows,decorations.pathmorphing,backgrounds,automata,positioning,fit,matrix}
\usepackage{caption,subcaption}
\usepackage{comment}

\allowdisplaybreaks

\pgfplotsset{soldot/.style={color=black,only marks,mark=*}} \pgfplotsset{holdot/.style={color=black,fill=white,only marks,mark=*}}

\newtheorem{thm}{Theorem}[section]

\newtheorem{lem}[thm]{Lemma}

\theoremstyle{definition}

\newtheorem{example}[thm]{Example}

\theoremstyle{remark}

\numberwithin{equation}{section}
\numberwithin{figure}{section}

 \newcommand{\R}{{\mathbb R}}

\newcommand{\sph}{{\mathbb S}}

 \newcommand{\Cont}{{\mathcal C}}

\newcommand{\Tt}{{\EuScript T}}

\newcommand{\pol}{{\EuScript K}}

\newcommand{\Ll}{{\EuScript L}}

\newcommand{\Ww}{{\EuScript W}}
\newcommand{\Vv}{{\EuScript V}}

\newcommand{\cl}{\operatorname{Cl}}
\newcommand{\dist}{\operatorname{dist}}

\newcommand{\diam}{\operatorname{diam}}
\newcommand{\Conv}{\operatorname{Conv}}
\newcommand{\st}{\operatorname{St}}

\newcommand{\s}{{\tt s}}

\newcommand{\veps}{\varepsilon}
\newcommand{\eps}{\epsilon}
\newcommand{\ol}{\overline}

\usepackage{scalerel}

\usepackage{accents}

\newcommand\ov[1]{\accentset{\circ}{#1}} %Simplessi aperti

\begin{document}

\vspace{-2em}
\title[Differentiable approximation of continuous locally definable maps]{Differentiable approximation of continuous locally definable maps that preserves the image}

\keywords{Locally o-minimal structures, approximation of locally semialgebraic maps, differentiable approximation.}
\subjclass[2020]{Primary: 14P10, 32B25; Secondary: 03C64, 32B20, 57R05.}

\date{\today}

\author{Antonio Carbone}
\address{Antonio Carbone, Dipartimento di Scienze dell'Ambiente e della Prevenzione, Palazzo Turchi di Bagno, C.so Ercole I D'Este, 32, Università di Ferrara, 44121 Ferrara (ITALY)}
\thanks{The author is partially supported by GNSAGA of INdAM}
\email{antonio.carbone@unife.it}

\begin{abstract}
Recently, we showed that continuous definable maps defined on compact definable sets can be uniformly approximated by continuous definable maps of class $\Cont^p$ without changing their image. The aim of this paper is to extend the previous result, this time taking into account the (strong) Whitney topology, to continuous locally definable maps defined on locally compact locally definable sets. The argument is an interplay between o-minimal and PL geometry and makes essential use of Paw\l{}ucki's desingularization techniques as well as our aforementioned result for the compact case.
\end{abstract}

\maketitle 

\section{Introduction}

Approximation is a tool of great importance in geometry. The importance of approximation of continuous maps is a natural question that arises from the classical Weierstrass approximation theorem: \textit{Every continuous map $f:X\to\R^m$, defined on a compact subset $X$ of $\R^n$, can be uniformly approximated on $X$ by polynomial maps.} Here, the approximating maps take values in the Euclidean space $\R^m$, but several difficulties arise when one tries to restrict the image of the approximating map to a fixed target space $Y\subset\R^m$. For instance, there exist no nonconstant polynomial maps from the sphere $\sph^2$ to the circle $\sph^1$. To overcome these difficulties one can consider, instead of polynomials, a more flexible class of approximating maps like differentiable maps. 

Let $X\subset \R^n$ and $Y\subset \R^m$ be any subsets and $p\geq 1$ an integer. Recall that a map $f:X\to Y$ is \textit{of class $\Cont^p$} if there exist an open neighbourhood $\Omega\subset \R^n$ of $X$ and an extension $F:\Omega\to \R^m$ of $f$ which is of class $\Cont^p$ in the usual sense. A classical example of differentiable approximation concerns Whitney's approximation theorem \cite{w} for continuous maps whose target space is a $\Cont^p$ submanifold of $\R^m$. Here, a crucial point is the existence of $\Cont^p$ tubular neighbourhoods for $\Cont^p$ submanifolds in $\R^m$. When the target space $Y$ has `singularities', the lack of existence of tubular neighbourhoods (of class $\Cont^p$ for $p\geq 1$), makes the approximation problem difficult to approach in its full generality. One possible way to proceed is to consider domains of definitions and target spaces in some suitable tame category. For instance, Fernando and Ghiloni \cite{fgh1,fgh2} conducted an extended study on differentiable approximation for continuous maps when the target space $Y$ admits suitable triangulations. They showed that differentiable approximation is possible for a wide class of triangulable sets including differentiable manifolds, polyhedra, semialgebraic sets, subanalytic sets and more generally definable sets of an o-minimal structure. We refer the reader to \cite[\S 1.1]{cf2} for an overview of the state of the art on the approximation problem in real algebraic geometry, and to \cite[\S 1]{ca} for a presentation of the problem more closely related to the topics discussed here. 

Recall that an \textit{o-minmal structure (on the ordered field of real numbers $\R$)} is a collection $\mathfrak{S}:=\{\mathfrak{S}_n\}_{n\in\mathbb{N}^*}$ of families of subsets of $\R^n$ satisfying the following properties:
\begin{itemize}
\item $\mathfrak{S}_n$ is a Boolean algebra,
\item $\mathfrak{S}_n$ contains the algebraic subsets of $\R^n$,
\item if $X\in \mathfrak{S}_n$ and $Y\in\mathfrak{S}_m$, then $X\times Y\in\mathfrak{S}_{n+m}$,
\item if $\pi:\R^n\times\R\to\R^n$ is the projection onto the first factor and $X\in\mathfrak{S}_{n+1}$, then $\pi(X)\in\mathfrak{S}_n$,
\item $\mathfrak{S}_1$ consists exactly of all the finite unions of points and intervals (of any type). 
\end{itemize}
The elements of $\mathfrak{S}_n$ are called \textit{definable subsets of} $\R^n$. A map $f:X\to Y$, between definable sets $X\subset\R^n$ and $Y\subset \R^m$, is a \textit{definable map} if its graph $\Gamma_f$ is a definable subset of $\R^{n+m}$. Let $p\geq 1$ be an integer. A definable map $f:X\to Y$ is a \textit{definable map of class $\Cont^p$} if there exist an open definable neighbourhood $\Omega\subset \R^n$ of $X$ and a definable extension $F:\Omega\to \R^m$ of $f$ which is of class $\Cont^p$ in the usual sense. As a consequence of the Tarski-Seidenberg theorem \cite[Thm.2.2.1]{BCR}, semialgebraic sets constitute an o-minimal structure, which is the `smallest' o-minimal structure, in the sense that it is contained in any other o-minimal structure. In particular, semialgebraic sets and maps are definable in any o-minimal structure. The collection of global subanalytic sets is precisely the collection of definable sets in the o-minimal structure $\R_{\text{an}}$ (see \cite{wi}). We refer the reader to \cite{vdD, vDM} for further information on the theory of o-minimal structures.

Let $\Omega\subset \R^n$ be an open set. A subset $X\subset \Omega$ is \textit{locally definable in $\Omega$} if for each $x\in \Omega$ there exists an open neighbourhood $U\subset\Omega$ of $x$ such that $X\cap U$ is a definable set. If $X$ is locally compact, then there exists an open neighbourhood $\Omega\subset \R^n$ of $X$ in which $X$ is close. As $\Omega\setminus X$ is open, then $X$ is locally definable in $\Omega$ if and only if for each $x\in X$ there exists an open neighbourhood $U\subset \R^n$ of $x$ such that $X\cap U$ is a definable set. As we will only deal with locally compact sets, in what follows, we will simply say that $X\subset \R^n$ is locally definable, without specifying the open set $\Omega$. In particular, for a locally compact set $X\subset \R^n$, locally definable means \textit{locally definable in any open set $\Omega\subset \R^n$ that contains $X$ as a closed subset.}

If $X\subset \R^n$ is an open subset, then it is locally semialgebraic (being a union of open balls), so, in particular, definable in any o-minimal structure. While, if $X\subset \R^n$ is locally definable and compact, then $X$ is definable, because it has a finite covering made of definable sets. A continuous map $f:X\to Y$ between locally compact locally definable subsets $X\subset \R^n$ and $Y\subset \R^m$ is \textit{locally definable} if its graph $\Gamma_f$, which is a locally compact subset of $\R^{n+m}$, is locally definable. Let $p\geq 1$ be an integer. A locally definable map $f:X\to Y$ is a \textit{locally definable map of class $\Cont^p$} if there exist an open neighbourhood $\Omega\subset \R^n$ of $X$ and a locally definable extension $F:\Omega\to \R^m$ of $f$ which is of class $\Cont^p$ in the usual sense. Observe that, in general, the image of a continuous locally definable map is not  guaranteed to be locally definable. For instance, it is easy to see that there exists a continuous locally semialgebraic map $\R\to \R^2$ whose image is the set $Y:=\bigcup_{s\geq 1}^{\infty}\big\{(x,y)\in\R^2 : \big(x-\tfrac{1}{s}\big)^2+y^2=\tfrac{1}{s^2}\big\}$, which is not locally definable at the origin.

For the rest of this article, even if not explicitly mentioned, when we refer to (locally) definable sets or (locally) definable maps we mean \textit{(locally) definable in a fixed o-minimal structure on the ordered field of real numbers $\R$.} 

\subsection{Presentation of the main result}

We denote by $\|\cdot\|_n$ the Euclidean norm of $\R^n$. Combining the results of Fernando and Ghiloni \cite{fgh1,fgh2} and the ones of Paw\l{}ucki \cite{Pa,pa2} one deduce straightforwardly the following:

\begin{thm}[{\cite[Thm.1.6]{pa2}}]\label{pafeghi}
Let $X\subset \R^n$ and $Y\subset \R^m$ be locally definable locally compact sets and $f:X\to Y$ a continuous map. Let $\veps:X\to \R$ be a strictly positive continuous function and $p\geq 1$ an integer. Then, there exists a map $g:X\to Y$ of class $\Cont^p$ such that $\|f(x)-g(x)\|_m<\veps(x)$ for each $x\in X$.
\end{thm}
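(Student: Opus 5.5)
The idea is to combine a triangulation of the target $Y$ with a $\Cont^p$ retraction-type map from a neighbourhood of $Y$ onto $Y$; this follows the strategy of Fernando and Ghiloni \cite{fgh1,fgh2}. We first treat the general map as a composition. Suppose we have an open neighbourhood $W\subset\R^m$ of $Y$ and, for each strictly positive continuous $\delta:Y\to\R$, a map $\rho:W\to Y$ with two properties: $\rho$ is of class $\Cont^p$, and $\rho$ is $\delta$-close to the identity near $Y$. That is, for each $y\in Y$ there is a neighbourhood $V_y\subset W$ of $y$ with $\|\rho(z)-y\|_m<\delta(y)$ for $z\in V_y$. Such a map is a weak retraction: it need not be the identity on $Y$, but it is close to it. Granting this, we approximate $f$ by a $\Cont^p$ map $h:X\to\R^m$ via the classical Whitney approximation theorem on a neighbourhood of $X$; we extend $f$ continuously by Tietze, which is fine because $X$ is closed in some open $\Omega$. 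We take $h$ close enough that $h(x)\in V_{f(x)}$. Then $g:=\rho\circ h$ is of class $\Cont^p$ with values in $Y$. The estimate $\|g(x)-f(x)\|_m<\delta(f(x))$ holds, so it suffices to choose $\delta$ with $\delta(f(x))\le\veps(x)$. Such a $\delta$ exists only after a suitable arrangement. The standard fix is to replace $Y$ by the graph $\Gamma_f\subset\R^{n+m}$, which is locally compact and locally definable by hypothesis. We then approximate the map $x\mapsto(x,f(x))$ and project onto the second factor. On $\Gamma_f$ the control function $\delta(x,y):=\veps(x)$ is continuous, and the projection is $1$-Lipschitz.

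The core step is constructing $\rho$ for a locally compact locally definable set $Z$ (here $Z=\Gamma_f$). Here Pawłucki's results \cite{Pa,pa2} enter. The first input is that locally definable locally compact sets admit locally finite definable triangulations; this gives a locally finite simplicial complex $K$ and a homeomorphism $\Phi:|K|\to Z$ whose restriction to each open simplex is a definable $\Cont^p$ diffeomorphism. The second, and key, input is Pawłucki's desingularization theorem. It provides a $\Cont^p$ map from a neighbourhood of $Z$ onto $Z$ that is arbitrarily close to the identity near $Z$. Equivalently, it shows that $Z$ is a $\Cont^p$ weak retract in the sense of Fernando–Ghiloni, since every closed subset of $\R^m$ admitting such a stratified structure is one. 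With this, Fernando–Ghiloni's theorem in \cite{fgh2} yields the approximation directly for targets that are weak $\Cont^p$ neighbourhood retracts.

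The main obstacle is exactly this weak retraction with control by an arbitrary strictly positive continuous function, rather than a constant, on a noncompact $Z$. I would first build it on each compact piece $Z\cap\overline{B}(0,k+1)\setminus B(0,k)$, which is definable, using Pawłucki's compact result. Then I would glue the pieces with a $\Cont^p$ partition of unity subordinate to the locally finite cover. The gluing step is delicate, because convex combinations leave $Z$. To avoid it, I would instead glue along the locally finite triangulation. Working simplex by simplex by induction on dimension, the construction of the previous skeleton is extended, as in Fernando–Ghiloni's skeleton-wise construction. Local finiteness allows the tolerances to be chosen as a sequence $\delta_k\le\min\veps$ on each compact piece.
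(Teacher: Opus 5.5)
\textbf{There is a genuine gap: the reduction to the graph $\Gamma_f$ uses a hypothesis the statement does not have.} The paper itself does not prove Theorem~\ref{pafeghi}. It quotes it from \cite[Thm.1.6]{pa2} as a direct combination of Fernando--Ghiloni and Paw\l{}ucki. The mechanism appears in Step~1 of the main proof: take Paw\l{}ucki's triangulation $\psi:|\Ll|\to Y$, a homeomorphism of class $\Cont^p$; approximate $\psi^{-1}\circ f$ by a $\Cont^p$ map into the polyhedron $|\Ll|$; then compose on the \emph{left} with $\psi$. Left composition is Whitney-continuous with no properness assumption on $f$.

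In Theorem~\ref{pafeghi} the map $f$ is only continuous, not locally definable. So $\Gamma_f$ is locally compact but in general \emph{not} locally definable, contrary to your claim that this holds ``by hypothesis''. For example, take $X=\R$ and $f(x)=x\sin(1/x)$, $f(0)=0$. Then $\Gamma_f$ meets $\R\times\{0\}$ in infinitely many isolated points accumulating at the origin, which is impossible for a definable set. Hence neither Paw\l{}ucki's triangulation nor his desingularization applies to $Z=\Gamma_f$, and your core step has no input.

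Without the graph, your control $\delta$ lives on $Y$, and $\delta(f(x))\le\veps(x)$ cannot be arranged when $f$ is not proper. Already for $f\equiv y_0$ on $X=\R$ and $\veps(x)=e^{-x^2}$, it forces $\delta(y_0)\le 0$.

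The repair is cheap: use $Z:=X\times Y$ instead of $\Gamma_f$. This set is locally compact and locally definable, $x\mapsto(x,f(x))$ maps $X$ into it, and $\delta(x,y):=\veps(x)$ is a positive continuous function on $Z$. Projecting onto the second factor still gives a $\Cont^p$ map with values in $Y$ that is $\veps$-close to $f$. Alternatively, if you take Fernando--Ghiloni's theorem for weak $\Cont^p$ neighbourhood-retract targets as given, apply it to $Y$ directly and drop the detour altogether.

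\textbf{The core step is attributed but not built.} You need a $\Cont^p$ map $\rho:W\to Z$ that is $\delta$-close to the identity for an arbitrary positive continuous $\delta$ on a non-compact $Z$. As you observe, convex combinations of retractions onto $Z$ leave $Z$, and ``skeleton-wise gluing'' does not say how the pieces are matched.

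The idea that makes gluing work, used in the paper's Steps~2--3, is to work in the polyhedral model:
\begin{enumerate}
\item Subdivide a triangulated neighbourhood $\Ww$ of $|\Ll|$ to small mesh.
\item Take Paw\l{}ucki's simplex-preserving $\Cont^p$ homeomorphisms $\psi_s$, for which $\rho_s=\rho_0\circ\psi_s$ is of class $\Cont^p$. For every point, all the $\rho_s$ land in one common simplex of $\Ll$.
\item Consequently $\sum_s\theta_s\,\rho_s\circ h$ stays in $|\Ll|$ for a partition of unity $\{\theta_s\}$ on the \emph{domain} $X$, with tolerances chosen on compact pieces of $X$.
\end{enumerate}
This absorbs the non-properness of $f$ without any graph, and the result is carried back to $Y$ by left composition with $\psi$.
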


Without additional assumptions on the involved map $f$ one cannot guarantee, in general, that the image of $f$ can be preserved after the approximation (i.e. $g(X)=f(X)$). For instance, if $f:\R\to \R^2$ is a surjective continuous map, one cannot find approximating maps $g$ of class $\Cont^p$, for $p\geq 1$, such that $g(\R)=\R^2$. In fact, as an application of Sard's theorem, there exist no surjective maps $g:\R\to \R^2$ of class $\Cont^p$ for $p\geq 1$.  

Our main result is the following. It ensure that if the involved continuos map $f$ is, in addition, locally definable and its image $f(X)$ is locally compact and locally definable, then it is always possible to find an approximating map $g$ of class $\Cont^p$ such that $g(X)=f(X)$. 

\begin{thm}\label{main1}
Let $X\subset \R^n$ and $Y\subset \R^m$ be locally compact locally definable sets. Let $\veps:X\to \R$ be a strictly positive continuous function and $p\geq 1$ an integer. Let $f:X\to Y$ be a continuous locally definable map such that $f(X)=Y$. Then, there exists a locally definable map $g:X\to Y$ of class $\Cont^p$ such that $\|f(x)-g(x)\|_m<\veps(x)$ for each $x\in X$ and $g(X)=Y$.
\end{thm}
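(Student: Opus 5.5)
We plan to reduce Theorem~\ref{main1} to the compact case proved in our earlier work: a continuous definable map $f:K\to\R^m$ on a compact definable set can be uniformly approximated by a definable $\Cont^p$ map $g$ with $g(K)=f(K)$. The difficulty is that $X$ is only locally compact and locally definable, and surjectivity has to be preserved globally. The strategy is to cut $X$ and $Y$ into compact definable pieces that fit together well, approximate piece by piece while preserving images, and then glue the pieces in a $\Cont^p$ way.

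\textbf{Step 1: triangulations.} Using Paw\l{}ucki's desingularization and triangulation results for locally definable sets, we triangulate $Y$ by a locally finite simplicial complex $\pol$ via a locally definable homeomorphism $\Phi:|\pol|\to Y$. Refining, we may assume every closed simplex $\Phi(\sigma)$ has diameter small compared to $\veps$ on $f^{-1}(\Phi(\sigma))$; this uses local finiteness together with a continuous positive minorant of $\veps\circ f^{-1}$ on compact pieces. Next we triangulate $X$ by a locally finite complex $\Ll$, compatibly with the closed locally definable sets $f^{-1}(\Phi(\sigma))$, through a locally definable homeomorphism $\Psi:|\Ll|\to X$.

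\textbf{Step 2: image-preserving approximation.} We build $g$ by induction over the skeleta of $\pol$, or equivalently over an exhaustion of $X$ by compact definable sets $K_1\subset \mathrm{int}K_2\subset\cdots$. On each compact piece $K_j\setminus\mathrm{int}K_{j-1}$, with a collar, we apply the compact result. To preserve the image we need $Y=\bigcup_j f(C_j)$, where $C_j$ denotes the piece on which $g$ is taken exactly equal to the compact-case approximant, so that images match piece by piece. Hence we choose the pieces as preimages of compact definable pieces of $Y$. Concretely, take an exhaustion $L_1\subset L_2\subset\cdots$ of $Y$ by compact definable sets (finite subcomplexes of $\pol$) and set $C_j=f^{-1}(L_j\setminus\mathrm{int}L_{j-1})$. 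Each $C_j$ is closed in $X$ but need not be compact. To remedy this, we select compact definable $D_j\subset C_j$ with $f(D_j)=L_j\setminus\mathrm{int}L_{j-1}$; this is possible because each compact piece of $Y$ is covered by images of finitely many compact pieces of $X$, by local compactness and local finiteness. We also keep the family $\{D_j\}$ locally finite and pairwise separated by definable open neighbourhoods.

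\textbf{Step 3: gluing.} On a neighbourhood of each $D_j$ we use the compact-case approximant $g_j$, with $g_j(D_j)=f(D_j)$. On the complement of the $D_j$ we use the map given by Theorem~\ref{pafeghi}, made locally definable through Paw\l{}ucki's construction, which has values in $Y$ and is close to $f$. Then we merge the two. \textbf{This merging is the main obstacle}: a convex combination with a partition of unity leaves $Y$, since $Y$ is singular and has no $\Cont^p$ retraction. To get around this, we will not glue in $Y$. Instead, we arrange from the start that all the maps are of the form $\Phi\circ h$, where $h$ takes values in $|\pol|$ and is close to $\Phi^{-1}\circ f$ in the simplicial sense, meaning it sends each point into the open star of the simplex carrying $\Phi^{-1}f$. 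We then glue in the PL category, using the fact that $\Phi$ can be chosen to be $\Cont^p$, which follows from Paw\l{}ucki's $\Cont^p$ triangulations. Concretely, we apply the compact theorem to a map into a compact polyhedron $|\pol_j|$ that agrees, near the boundary of its domain, with a map that is fixed in advance and already $\Cont^p$. This relative version follows from the compact result applied to the target $|\pol_j|$ together with a collar argument. Finally, $g=\Phi\circ h$ is locally definable, $\Cont^p$, $\veps$-close to $f$ by the mesh choice, and surjective because $Y=\bigcup_j g(D_j)$.
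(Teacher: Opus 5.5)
\paragraph{Gap 1: the decomposition assumes $f$ is proper.} Your plan has a genuine gap, and it comes from treating $f$ as if it were proper. In Step 2 you claim that every compact piece $L_j\setminus\mathrm{int}L_{j-1}$ of $Y$ is the image of a compact definable $D_j\subset X$. This is false in general.

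Take $X=\{0\}\cup\bigcup_{k\geq1}[2k,2k+1]\subset\R$ and $Y=[0,1]$. Let $f(0)=0$, and let $f$ map $[2k,2k+1]$ affinely onto $[\tfrac{1}{k+1},\tfrac{1}{k}]$. All hypotheses of the theorem hold. However, a compact $C\subset X$ meets only finitely many of the intervals, so $f(C)\subset\{0\}\cup[\tfrac1N,1]$ for some $N$. Hence no compact subset of $X$ maps onto a piece of $Y$ containing a segment $[0,a]$ with $a>0$.

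The same example with $\veps(x)=e^{-x}$ also breaks Step 1. There $\veps$ has no positive lower bound on $f^{-1}([0,a])$, so no subdivision of $Y$ has mesh adapted to $\veps$, and ``$\veps$-close by the mesh choice'' cannot be arranged.

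The paper avoids both problems by cutting $X$ rather than $Y$. It writes $X=\bigcup_s Z_s$, where $Z_s=\varphi(\sigma_s)$ are images of maximal simplices, so $Y=\bigcup_s f(Z_s)$ holds automatically. The tolerances $\veps_s$ are attached to pieces of $X$. This is why the paper uses a \emph{family} of subdivisions $\Ww^*_s$ of mesh $<\tfrac13\veps_s$ and corresponding maps $\rho_s$, one per piece, instead of a single fine triangulation of $Y$.

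Because the $Z_s$ overlap, the paper then needs its widening step. It builds a $\Cont^p$ map $\Gamma_s$, close to the identity, that sends a shrunken copy $Z_s^*\subset\varphi(\ov{\sigma}_s)$ onto $Z_s$. It also shrinks the sets $U_r$, $r\neq s$, so that they miss $Z_s^*$. Then only $\theta_s$ is active on $Z_s^*$, and $g(Z_s^*)=\rho_s(H_s(\Gamma_s(Z_s^*)))=f(Z_s)$.

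\paragraph{Gap 2: the gluing mechanism is missing.} You rightly flag gluing as the main obstacle, but you do not resolve it. Requiring $h$ to send each point into the open star of the carrier simplex does not keep convex combinations inside the polyhedron. Two such values can lie in different simplices of the star, and the segment joining them leaves $|\pol|$.

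Your ``relative version'' of the compact theorem via a collar does not follow from the compact result either. Interpolating in the collar between the prescribed $\Cont^p$ map and the compact approximant, without leaving the polyhedron, is exactly the gluing problem again.

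The missing idea is the paper's weak retraction operator, built as follows.
\begin{enumerate}
\item Realize $|\Ll|$ as a subcomplex of a triangulated neighbourhood $\Ww$, after barycentric subdivision so that each simplex $\tau$ of $\Ww$ meets $|\Ll|$ in a single face.
\item Take a retraction $\rho_0:|\Ww|\to|\Ll|$ that moves each point inside its own simplex $\tau$.
\item Use Paw\l{}ucki's desingularization to get simplex-preserving $\Cont^p$ homeomorphisms $\psi_s$ such that $\rho_s=\rho_0\circ\psi_s$ is of class $\Cont^p$.
\end{enumerate}
Then for every point $y$, all the values $\rho_s(y)$ lie in one simplex of $\Ll$. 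Consequently $\rho[h]=\sum_s\theta_s\,\rho_s\circ h$ takes values in $|\Ll|$.

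This is what allows the local approximants $H_s\circ\Gamma_s$ to be glued freely in the ambient space $\R^m$ and only afterwards pushed into $|\Ll|$. On $Z_s^*$ only one index is active, so the image there is still controlled exactly.
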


The techniques already developed in \cite{ca} can be extended and used to prove the previous result under the additional assumption that the involved map $f$ is proper. However, they seem not adequate to deal with the general case, that is why, we have use a different strategy in our proof. The main difficulty is to guarantee that the approximating map $g$ preserves the image, that is $g(X)=Y$. The argument is an interplay between o-minimal and PL geometry and makes an essential use of (the brilliant) Paw\l{}ucki's desingularization techniques \cite{Pa, pa2} and of \cite[Thm.1.4]{ca}, which is the analogous of Theorem \ref{main1} in the compact case. 

\section{Preliminaries}

In this section we collect some preliminary concept and results that we will use freely in what follows.

\subsection{Whitney topology}

Let $X\subset\R^n$ and $Y\subset\R^m$ be locally compact sets. We denote by $\Cont^0(X,Y)$ the set of all continuous maps from $X$ to $Y$. We will consider on $\Cont^0(X,Y)$ the \textit{(strong) Whitney topology}. A fundamental system of open neighbourhoods of $f\in\Cont^0(X,Y)$ with respect to the Whitney topology is given by the sets 
$$
\mathcal{N}(f,\veps):=\{g\in\Cont^0(X,Y) : \|f(x)-g(x)\|_m<\veps(x) \text{\, for each\, } x\in X\},
$$
where $\veps:X\to \R$ is any strictly positive continuous function. In what follows we will make often use of the following well known result, whose proof follows by \cite[\S2.5, Ex.10]{hi} using standard arguments.

\begin{lem}
Let $X\subset \R^n$, $Y\subset \R^m$ and $Z\subset \R^p$ be locally compact set. Let $f:Y\to Z$ be a continuous map. Then, the map
$$
f_*:\Cont^0(X,Y)\to \Cont^0(X,Z), \quad h\mapsto f\circ h
$$
is continuous with respect to the Whitney topology.
\end{lem}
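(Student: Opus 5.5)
To prove that $f_*$ is continuous at a fixed $h\in\Cont^0(X,Y)$, I will show that for every strictly positive continuous $\veps:X\to\R$ there is a strictly positive continuous $\delta:X\to\R$ with $f_*(\mathcal{N}(h,\delta))\subset\mathcal{N}(f\circ h,\veps)$. The plan is a local uniform continuity argument that is glued by a locally finite exhaustion of $X$. Pointwise continuity of $f$ is not enough, since the perturbations $g$ move $h(x)$ inside $Y$ by amounts that have to be controlled uniformly on pieces of $X$. The right tool is uniform continuity of $f$ on compact subsets of $Y$, which is available because $Y$ is locally compact.

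\textbf{Step 1: exhaustion.} Since $X$ is locally compact and $\sigma$-compact (it is a locally compact subset of $\R^n$), I choose compact sets $K_1\subset K_2\subset\cdots$ with $K_j\subset\operatorname{int}_X K_{j+1}$ and $\bigcup_j K_j=X$. The images $h(K_{j+1})$ are compact subsets of $Y$. By local compactness of $Y$ there is $r_j>0$ such that
\[
L_j:=\{y\in Y:\dist(y,h(K_{j+1}))\le r_j\}
\]
is compact. To see this, cover $h(K_{j+1})$ by finitely many relatively compact neighbourhoods in $Y$ and take a Lebesgue number.

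\textbf{Step 2: local constants.} Since $f$ is uniformly continuous on $L_j$, there is $\eta_j\in(0,r_j]$ such that $y,y'\in L_j$ and $\|y-y'\|_m<\eta_j$ imply
\[
\|f(y)-f(y')\|_p<\min_{K_{j+1}}\veps .
\]
Note that for $x\in K_{j+1}$ and $\|g(x)-h(x)\|_m<\eta_j\le r_j$, both $h(x)$ and $g(x)$ lie in $L_j$, because $g(x)\in Y$.

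\textbf{Step 3: gluing $\delta$.} I need a continuous $\delta>0$ on $X$ with $\delta\le\eta_j$ on $K_{j+1}\setminus\operatorname{int}K_j$. Such a function exists by the standard partition-of-unity argument, which is the content of \cite[\S2.5, Ex.10]{hi}. Concretely, set $\delta:=\sum_j\phi_j\,c_j$, where $\{\phi_j\}$ is a partition of unity subordinate to the locally finite cover $\{\operatorname{int}K_{j+1}\setminus K_{j-1}\}$ and $c_j:=\min\{\eta_{j-1},\eta_j,\eta_{j+1}\}$. This gives $\delta(x)\le\eta_j$ whenever $x\in K_{j+1}\setminus\operatorname{int} K_j$. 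Then for $g\in\mathcal{N}(h,\delta)$ and any $x$, pick $j$ with $x\in K_{j+1}\setminus\operatorname{int}K_j$. Step 2 gives $\|f(g(x))-f(h(x))\|_p<\veps(x)$, as required.

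The main obstacle is Step 3, namely producing a single continuous $\delta$ that respects infinitely many local moduli of continuity simultaneously. A secondary subtlety is ensuring that $g(x)$ stays in a compact region where uniform continuity applies. This is handled by the choice $\eta_j\le r_j$ together with the hypothesis that $g$ takes values in $Y$.
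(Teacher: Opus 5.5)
Your proof is correct. The paper gives no argument of its own here. It only refers to \cite[\S2.5, Ex.10]{hi}, i.e.\ continuity of left composition in the strong topology for manifolds, and leaves the transfer to locally compact subsets of Euclidean spaces to ``standard arguments''. You instead give that standard argument directly and in self-contained form. You take a compact exhaustion $K_j\subset\operatorname{int}K_{j+1}$ of $X$, use compact neighbourhoods $L_j$ of $h(K_{j+1})$ in $Y$ to get uniform moduli $\eta_j$, and glue the $\eta_j$ into one continuous $\delta$ with a partition of unity. The gluing works: at a point of $K_{j+1}\setminus\operatorname{int}K_j$ only $\phi_j$ and $\phi_{j+1}$ can be nonzero, and $c_j,c_{j+1}\le\eta_j$.

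What your route buys is that the hypotheses actually used become visible. You need only $\sigma$-compactness of $X$ and local compactness of $Y$ (the latter just to make $L_j$ compact), and nothing about $Z$. The citation is shorter but requires adapting a statement about manifolds.

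Two small bookkeeping points:
\begin{itemize}
\item Index from $j=0$ with $K_{-1}=K_0=\varnothing$. Then every point of $K_1$ lies in some $K_{j+1}\setminus\operatorname{int}K_j$, and $\eta_0$ is defined.
\item The reference you attach to Step 3 is the one the paper uses for the whole lemma, not a result about constructing $\delta$. Your explicit construction makes it unnecessary anyway.
\end{itemize}
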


\subsection{Locally finite simplicial complexes}\label{precomplessi}

Given a subset $X\subset \R^n$ we denote by $\cl(X)$ the Euclidean closure of $X$ in $\R^n$. A \textit{simplex} $\sigma\subset\R^n$ of dimension $d$ is the convex hull of $d+1$ affinely independent points $\nu_0,\ldots,\nu_d\in\R^n$, that is
$$
\sigma=\Conv(\{\nu_0,\ldots,\nu_d\}):=\Big\{\lambda_0\nu_0+\ldots+\lambda_d\nu_d :  \lambda_0\geq 0,\ldots,\lambda_d\geq 0, \sum_{i=0}^d\lambda_i=1\Big\}.
$$
If $0\leq i_0<\ldots<i_k\leq d$, the simplex $\Conv(\{\nu_{i_0},\ldots,\nu_{i_k}\})$ is called a \textit{face of} $\sigma$ of dimension $k$. As usual, a face of dimension zero is called a \textit{vertex of} $\sigma$. We denote by $\partial\sigma$ the \textit{(relative) boundary of} $\sigma$ defined as the union of the proper faces of $\sigma$ and by $\ov{\sigma}:=\sigma\setminus\partial\sigma$ the \textit{(relative) interior of} $\sigma$, which is equal to the interior of $\sigma$ in the affine space generated by $\sigma$ in $\R^n$ (that is, the smallest affine subspace of $\R^n$ that contains $\sigma$). Observe that if $\sigma$ is a simplex of dimension zero, then $\partial \sigma=\varnothing$, so $\ov{\sigma}=\sigma$. Moreover, observe that $\sigma=\cl(\ov{\sigma})$ for each simplex $\sigma\subset \R^n$. 

A \textit{locally finite simplicial complex $\pol$ of $\R^n$} is a locally finite family of simplices of $\R^n$ such that 
\begin{itemize}
\item for each simplex $\sigma\in\pol$ all the faces of $\sigma$ belong to $\pol$,
\item for each $\sigma_1,\sigma_2\in\pol$ the intersection $\sigma_1\cap\sigma_2$ is either empty or a common face of both $\sigma_1$ and $\sigma_2$.
\end{itemize}
The \textit{(underlying) polyhedron of} $\pol$ is the set $|\pol|:=\bigcup\pol$ equipped with the topology induced by the Euclidean topology of $\R^n$. Observe that $|\pol|$ is a locally semialgebraic subset of $\R^n$, so locally definable in any o-minimal structure. A simplex $\sigma\in\pol$ is a \textit{maximal simplex of} $\pol$ if it is not a proper face of another simplex $\sigma'\in\pol$. We denote by $\pol_{\max}$ the set of all maximal simplices of $\pol$. A \textit{subcomplex $\Tt$ of} $\pol$ is a locally finite simplicial complex of $\R^n$ such that $\Tt\subset\pol$. Observe that for each simplex $\sigma\in\pol$, we have that $\sigma$ is the underlying polyhedron of the finite subcomplex $\{\sigma'\in\pol : \sigma'\subset \sigma\}$ of $\pol$. 

Let $\Tt$ be a subcomplex of $\pol$. The \textit{open star of $\Tt$ in} $\pol$ is the set 
$$
\st(|\Tt|,\pol):=\{\ov{\sigma}' : \sigma'\in\pol,\, \sigma'\cap\sigma=\varnothing \text{\, for some\, } \sigma\in \Tt\}.
$$
The set $|\st(|\Tt|,\pol)|:=\bigcup\st(|\Tt|,\pol)$ is an open neighbourhood of $|\Tt|$ in $|\pol|$, which is locally definable, because $\pol$ is locally finite. Moreover, if $\Tt$ is finite (for instance, if $|\Tt|$ is a simplex), then $|\st(|\Tt|,\pol)|$ is a definable set. The \textit{closed star of $\Tt$ in} $\pol$ is defined as 
$$
\ol{\st}(\sigma,\pol):=\{\sigma \in \pol: \sigma\subset \cl(|\st(\sigma,\pol)|)\}.
$$
The set $|\ol{\st}(|\Tt|,\pol)|:=\bigcup\ol{\st}(|\Tt|,\pol)$ is a locally compact locally definable neighbourhood of $|\Tt|$ in $|\pol|$. Moreover, $|\ol{\st}(|\Tt|,\pol)|=\cl(|\st(|\Tt|,\pol)|)$. Observe that if $\Tt$ is finite, then $|\ol{\st}(|\Tt|,\pol)|$ is a compact definable neighbourhood of $|\Tt|$ in $|\pol|$. The open star $\st(|\Tt|,\pol)$ is not a subcomplex of $\pol$ (it is a subcomplex if and only if $|\Tt|$ is an isolated vertex of $|\pol|$), while, the closed star $\ol{\st}(|\Tt|,\pol)$ is always a subcomplex of $\pol$. 

The \textit{second open star of $\Tt$ in} $\pol$ is the set 
$$
\st^{(2)}(|\Tt|,\pol):=\st(\ol{\st}(|\Tt|,\pol),\pol).
$$
The set $|\st^{(2)}(|\Tt|,\pol)|:=\bigcup\st^{(2)}(|\Tt|,\pol)$ is a locally definable open neighbourhood of $|\Tt|$ in $|\pol|$. The \textit{second closed star of $\Tt$ in} $\pol$ is defined as $\ol{\st}^{(2)}(|\Tt|,\pol):=\ol{\st}(\ol{\st}(|\Tt|,\pol),\pol)$. Observe that $\ol{\st}^{(2)}(|\Tt|,\pol)$ is a subcomplex of $\pol$. The set $|\ol{\st}^{(2)}(|\Tt|,\pol)|:=\bigcup\ol{\st}^{(2)}(|\Tt|,\pol)$ is a locally definable locally compact neighbourhood of $|\Tt|$ in $|\pol|$. Moreover, $|\ol{\st}^{(2)}(|\Tt|,\pol)|=\cl(|\st^{(2)}(|\Tt|,\pol)|)$. 

If $\pol$ is a locally finite subcomplex of $\R^n$, then $|\pol|$ is locally compact. In fact, given a point $x\in |\pol|$ there exists a simplex $\sigma\in\pol$ such that $x\in \sigma$ and $|\ol{\st}(\sigma,\pol)|$ is a compact neighbourhood of $x$ in $|\pol|$. If $\Tt$ is a locally finite subcomplex of $\pol$, then $|\Tt|$ is closed in $|\pol|$. In fact, let $x\in |\pol|\setminus|\Tt|$. Then, there exists a simplex $\sigma\in \pol$ such that $x\in \sigma$ and a vertex $\nu$ of $\sigma$ such that $\nu\not\in|\Tt|$. The set $|\pol|\setminus|\Tt|$ is open, because $|\st(\nu,\pol)|$ is an open neighbourhood of $x$ in $|\pol|\setminus|\Tt|$, so $|\Tt|$ is closed in $|\pol|$. In particular, given a subcomplex $\Tt$ of $\pol$, both the sets $|\st(|\Tt|,\pol)|$ and $|\st^{(2)}(|\Tt|,\pol)|$ are closed in $|\pol|$. 

 A \textit{(locally finite) refinement} (also called \textit{subdivision}) \textit{of} $\pol$ is a (locally finite) simplicial complex $\pol^*$ such that $|\pol^*|=|\pol|$ and each simplex $\sigma^*\in\pol^*$ is contained in some simplex $\sigma\in\pol$.

We end this section with the following:

\begin{lem}\label{piccolo}
Let $\Delta:\pol_{\max}\to \R,\, \sigma\mapsto \Delta(\sigma)$ be a strictly positive function. Then, there exists a strictly positive continuous function $\delta:|\pol|\to \R$ such that $\delta(x)<\Delta(\sigma)$ for each $\sigma\in\pol_{\max}$ such that $x\in \sigma$. 
\end{lem}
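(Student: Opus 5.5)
Since $\pol$ is locally finite, each point $x\in|\pol|$ lies in only finitely many maximal simplices. So the function
$$
\Delta^*(x):=\min\{\Delta(\sigma) : \sigma\in\pol_{\max},\ x\in\sigma\}
$$
is well defined and strictly positive. It need not be continuous, so the plan is to replace it by a smaller continuous function. We do this with a partition of unity indexed by the vertices of $\pol$, that is, with barycentric coordinates.

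For each vertex $\nu$ of $\pol$ set
$$
c_\nu:=\min\{\Delta(\sigma): \sigma\in\pol_{\max},\ \sigma\cap|\st(\nu,\pol)|\neq\varnothing\}.
$$
Equivalently, $c_\nu$ is the minimum of $\Delta$ over the maximal simplices meeting the closed star $|\ol{\st}(\nu,\pol)|$. This is a minimum over a finite set: the closed star of a vertex is compact, and local finiteness implies that only finitely many simplices meet a compact set. Hence $c_\nu>0$. Let $\lambda_\nu:|\pol|\to[0,1]$ be the barycentric coordinate function of $\nu$. It is affine on each simplex, it vanishes off $|\st(\nu,\pol)|$, and $\sum_\nu\lambda_\nu\equiv1$, where the sum is locally finite. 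Define
$$
\delta(x):=\tfrac12\sum_{\nu}c_\nu\lambda_\nu(x).
$$

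First, $\delta$ is continuous. On each simplex $\tau\in\pol$ it is a finite affine combination, hence continuous. The simplices form a locally finite closed cover of $|\pol|$, so continuity glues to all of $|\pol|$.

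Second, $\delta$ is strictly positive: at each $x$ some $\lambda_\nu(x)>0$ and all $c_\nu>0$.

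Third, $\delta$ satisfies the required bound. Fix $x$ and a maximal simplex $\sigma\ni x$. If $\lambda_\nu(x)>0$, then $x\in|\st(\nu,\pol)|$, so $\sigma$ meets the open star of $\nu$ and therefore $c_\nu\le\Delta(\sigma)$. Consequently
$$
\delta(x)\le\tfrac12\Delta(\sigma)\sum_\nu\lambda_\nu(x)=\tfrac12\Delta(\sigma)<\Delta(\sigma).
$$

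The only delicate point is verifying the properties of the barycentric coordinates on a locally finite, possibly infinite, complex: well-definedness across shared faces, continuity via the locally finite closed cover, and support in the open star. These are standard, and they are also what makes each $c_\nu$ a finite minimum.
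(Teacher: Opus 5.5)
Your proof is correct and is essentially the paper's argument: the paper takes $\delta_\nu=\frac12\min\{\Delta(\sigma):\sigma\in\pol_{\max},\ \nu\in\sigma\}$ at each vertex and extends it piecewise affinely, and your $c_\nu$ is exactly this minimum without the factor $\frac12$, since a simplex meets the open star of $\nu$ if and only if it contains $\nu$. One small slip: your claimed equivalence with the closed-star version is false in general, because a maximal simplex can touch $|\ol{\st}(\nu,\pol)|$ only along the link of $\nu$; you use it only to get finiteness, though, and that follows anyway because the open star is contained in the closed star.
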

\begin{proof}
For each vertex $\nu$ of $\pol$ define
$$
\delta_\nu:=\frac{1}{2}\min\{\Delta(\sigma) : \sigma\in\pol_{\max}, \nu\in \sigma\},
$$
which is a well-define positive number, because $\pol$ is locally finite and each vertex $\nu$ belongs to at least one maximal simplex $\sigma\in\pol_{\max}$. Denote by $\Vv$ the set of all vertices of $\pol$. Let $\delta:|\pol|\to \R$ be the (unique) piecewise affine extension of the function $\Vv\to \R, \nu\mapsto \delta_\nu$. Clearly, $\delta$ is a continuous map. Let $x\in |\pol|$ and let $\sigma\in\pol_{\max}$ be a maximal simplex such that $x\in \sigma$, then
$
\delta(x)=\lambda_0\delta_{\nu_0}+\ldots+\lambda_s\delta_{\nu_s}
$
where $\nu_0,\ldots,\nu_s$ are the vertices of $\sigma$ and $(\lambda_0,\ldots,\lambda_s)$ the barycentric coordinates of $x$. As $\lambda_0\geq 0,\ldots,\lambda_s\geq 0$ and $\lambda_0+\ldots+\lambda_s=1$, we deduce that $\delta(x)>0$, because $\delta_\nu>0$ for each vertex $\nu\in\Vv$. Moreover,
$$
\delta(x)=\sum_{k=0}^s\lambda_k\delta_{\nu_k}=\frac{1}{2}\sum_{k=0}^s\lambda_k\min\{\Delta(\sigma') : \sigma'\in\pol_{\max}, \nu_k\in \sigma'\}\leq \frac{1}{2}\sum_{k=0}^s\lambda_k\Delta(\sigma)=\frac{1}{2}\Delta(\sigma)<\Delta(\sigma),
$$
as required.
\end{proof}

\section{Proof of Theorem \ref{main1}}

In this section we show Theorem \ref{main1}. We start with the following lemma. 

\begin{lem}\label{compconn}
Let $X\subset \R^n$ and $Y\subset\R^m$ be locally compact locally definable sets and $f:X\to Y$ be a locally definable map. Let $p\geq 1$ be an integer. If the restriction $f|_{X_i}:X_i\to Y$ is of class $\Cont^p$ for each connected component $X_i$ of $X$, then $f$ is a locally definable map of class $\Cont^p$.
\end{lem}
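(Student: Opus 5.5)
The plan is to glue the local $\Cont^p$ extensions of the restrictions $f|_{X_i}$ along pairwise disjoint open neighbourhoods of the connected components $X_i$. First I will record two topological facts. Since $X$ is locally compact and locally definable, every point $x\in X$ has a neighbourhood $U$ such that $X\cap U$ is definable. A definable set has finitely many connected components, and each of them is locally connected. It follows that $X$ is locally connected, so each component $X_i$ is open and closed in $X$. Moreover, the family $\{X_i\}_i$ is locally finite in $\R^n$ near $X$: a small definable compact neighbourhood of a point meets only finitely many components. Since $X$ is closed in some open set $\Omega_0\subset\R^n$, the components are also closed in $\Omega_0$.

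Next I will choose pairwise disjoint open neighbourhoods $W_i\subset\Omega_0$ of the $X_i$. Let $Z_i:=\bigcup_{j\neq i}X_j=X\setminus X_i$, which is closed in $\Omega_0$ because the family is locally finite. Define
$$
W_i:=\{y\in\Omega_0 : \dist(y,X_i)<\dist(y,Z_i)\}.
$$
These sets are open, contain $X_i$, and are pairwise disjoint. If a single index has $X_i=X$, then $W_i:=\Omega_0$. Using distance functions would leave the locally definable world. To avoid this, I will instead take each $W_i$ to be a union of small open balls centred at points of $X_i$, with radii given by a continuous function smaller than half the distance to $Z_i$. Such a set is locally semialgebraic, and the balls from different components stay disjoint.

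By hypothesis, for each $i$ there is an open neighbourhood $\Omega_i$ of $X_i$ and a locally definable $\Cont^p$ extension $F_i:\Omega_i\to\R^m$ of $f|_{X_i}$. Set $\Omega:=\bigcup_i(\Omega_i\cap W_i)$ and let $F:=F_i$ on $\Omega_i\cap W_i$. The map $F$ is well defined because the $W_i$ are disjoint. It is of class $\Cont^p$ because this property is local and the pieces are open and disjoint. It extends $f$ because each point of $X$ lies in exactly one $X_i$.

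The main obstacle is local definability of $F$ and $\Omega$. The key point is that only finitely many of the sets $W_i$ meet a small neighbourhood $U$ of any point of $\Omega$. To secure this, I will shrink $\Omega_0$ to an open set on which $\{\Cl(W_i)\}$ is locally finite. This is possible because $\{X_i\}$ is locally finite and the radii can be chosen small. With that in place, $\Gamma_F\cap(U\times\R^m)$ is a finite union of locally definable pieces, hence definable after shrinking $U$. This completes the argument.
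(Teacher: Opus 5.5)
Your proposal is correct and follows the paper's strategy: separate the components $X_i$ by pairwise disjoint open neighbourhoods, restrict the locally definable $\Cont^p$ extensions $F_i$ to these neighbourhoods, and glue them into $F$. The one difference is how you build the disjoint neighbourhoods, namely balls of radius less than half the distance to $X\setminus X_i$, whereas the paper shrinks inductively along a countable enumeration of the components; your construction is slightly cleaner. Your final paragraph is more than needed: local definability of $F$ is only tested at points of its graph, and each point of $\Omega$ already has a neighbourhood inside a single piece $\Omega_i\cap W_i$, where $F=F_i$.
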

\begin{proof}
Let $\{X_i\}_{i\in I}$ be the family of connected component of $X$. As $X$ is a locally compact subset of $\R^n$, then it admits an exhaustion by compact sets. It follows, using standard arguments, that $I$ is at most countable. Let $\Omega\subset \R^n$ be an open neighbourhood of $X$ in $\R^n$ in which $X$ is closed. We claim: \textit{For each $i\in I$ there exists an open neighbourhood $\Omega_i$ of $X_i$ in $\Omega$ such that $\Omega_i\cap \Omega_j=\varnothing$ if $j\neq i$.} Fix $i\in I$. As $X$ is locally definable in $\Omega$, then for each $x\in X_i$ there exists an open definable neighbourhood $B_{i,x}$ of $x$ in $\Omega$ such that $B_{i,x}\cap X$ is definable, so, in particular, it has finitely many connected components \cite[Cor.3.3.6]{vdD}. Thus, $B_{i,x}\cap X_j\neq \varnothing$ only for finitely many indices $j\in I$, so, up to shrinking $B_{i,x}$ if necessary, we may assume that $B_{i,x}\cap X_j\neq \varnothing$ if and only if $j=i$. Define $\Omega_i:=\bigcup_{x\in X_i}B_{i,x}$ which is an open neighbourhood of $X_i$ in $\Omega$. Up to shrinking $\Omega_i$ if necessary, we may assume that $\cl(\Omega_i)\cap X_j=(\cl(\Omega_i)\cap \Omega)\cap X_j=\varnothing$ for each $j\neq i$. As $\Omega\setminus\cl(\Omega_i)$ is an open neighbourhood of $X\setminus X_i$ in $\Omega$ such that $(\Omega\setminus\cl(\Omega_i))\cap \Omega_i=\varnothing$, the claim follows easily using an inductive argument, because $I$ is at most countable. 

The restriction $f|_{X_i}$ is a locally definable map of class $\Cont^p$ for each $i\in I$. Thus, there exists an open neighbourhood $\Omega_i^*$ of $X_i$ in $\Omega_i$ and a locally definable extension $F_i:\Omega_i^*\to \R^m$ of $f|_{X_i}$ to $\Omega_i^*$. Let $\Omega^*:=\bigcup_{i\in I}\Omega^*_i$, which is an open neighbourhood of $X$ in $\R^n$. The map $F:\Omega^*\to \R^n$ defined as $F(x)=F_i(x)$ if $x\in \Omega_i^*$, is a well-defined locally definable extension of $F$ to $\Omega^*$ of class $\Cont^p$. We conclude that $f$ is a locally definable map of class $\Cont^p$, as required.
\end{proof}

The previous result is no longer true without the assumption that the involved map $f$ is locally definable, as shown in the following example. 

\begin{example}
Consider the compact set $X:=\{0\}\cup\{1/n : n\in \mathbb{N}^*\}$ and the function $f:X\to \R$ defined as $f(0)=0$ and $f(1/n):=n$ if $n\in\mathbb{N}^*$. The set $X$ is totally disconnected and the restriction of $f$ to each of its connected components is of class $\Cont^p$, because it is constant. The function $f$ is not locally bounded around $0$, so it does not admit any extension of class $\Cont^p$. In particular, $f$ is not of class $\Cont^p$. \hfill$\sqbullet$
\end{example}

We are ready to show Theorem \ref{main1}. We denote by $\dist(x,y):=\|x-y\|_n$ the (Euclidean) distance between the points $x,y\in\R^n$. Let $X,Y\subset \R^n$ be any subsets. We denote by $\dist(x,X):=\inf\{\dist(x,y) : y\in X\}$ the distance between the point $x\in \R^n$ and $X$. Recall that if $X$ is closed in $\R^n$ it holds, $\dist(x,X)=0$ if and only if $x\in X$. We denote by $\dist(X,Y):=\inf\{\dist(x,Y) : x\in X\}$ the distance between the sets $X$ and $Y$. If $X$ and $Y$ are compact, then $\dist(X,Y)=0$ if and only if $X\cap Y\neq\varnothing$. 

\begin{proof}[Proof of Theorem \ref{main1}]
The proof is long and technical and we divide it into several steps and subsequent reductions in order to help the reader to keep an overview on the argument. Let $\veps:X\to \R$ be a strictly positive continuous function and $p\geq1$ an integer.

%1
\noindent{\sc Step 1. Initial preparation.} As $Y\subset \R^m$ is a locally compact and locally definable subset of $\R^m$, by \cite[Thm.1.1]{pa2}, there exists a locally definable triangulation $(\Ll, \psi)$ of $Y$ in $\R^m$ such that $\psi:|\Ll|\to Y$ is a locally definable homeomorphism of class $\Cont^p$. The map 
$$
(\psi)_*:\Cont^0(X,|\Ll|)\to \Cont^0(X,Y), \quad g\mapsto \psi\circ g
$$ 
is continuous with respect to the Whitney topology. Thus, there exists a strictly positive continuous function $\delta_0:X\to \R$ such that if $g\in \Cont^0(X,|\Ll|)$ is such that $\|\psi^{-1}(f(x))-g(x)\|_m<\delta_0(x)$ for each $x\in X$, then 
$$
\|f(x)-\psi(g(x))\|_m=\|\psi(\psi^{-1}(f(x)))-\psi(g(x))\|_m<\veps(x)
$$
for each $x\in X$. In particular, up to substituting $f$ with $\psi^{-1}\circ f$ and $\veps$ with $\delta_0$, we may assume: \textit{$Y=|\Ll|$ is the (underlying) polyhedron of a locally finite simplicial complex $\Ll$ of $\R^m$.} Observe that $|\Ll|$ is still a locally compact locally definable subset of $\R^m$, because $\psi:|\Ll|\to Y$ is a locally definable homeomorphism.

As $X\subset\R^n$ is locally compact, then it is a locally closed subset of $\R^n$. Thus, it is contained as a closed subset of an open set $\Omega$ of $\R^n$. As $\Omega$ is open, then it is locally semialgebraic (being a union of open balls), so, in particular, locally definable. Let $(\pol,\varphi)$ be a locally definable triangulation of $\Omega$ in $\R^n$ subordinated to $X$. Denote by $\Tt$ the locally finite subcomplex of $\pol$ such that $|\Tt|=\varphi^{-1}(X)$. We may assume that $X$ is not compact, otherwise the statement follows by \cite[Thm,1.4]{ca}. In particular, the set of maximal simplices of $\Tt$ is countable, that is $\Tt_{\max}=\{\sigma_s\}_{s\geq 1}$. Here, we are implicitly assuming that $\sigma_s\neq \sigma_r$ if $s\neq r$.

Fix an integer $s\geq 1$. Define the set $Z_s:=\varphi(\sigma_s)$, which is a compact definable subset of $X$, because $\sigma_s$ is compact and $\varphi$ locally definable, so the restriction $\varphi|_{\sigma_s}$ is a definable map. Clearly, we have
\begin{equation}\label{unione}
X=\bigcup_{s\geq 1} Z_s,
\end{equation}
because $|\Tt|=\bigcup_{s\geq 1}\sigma_s$ and $X=\varphi(|\Tt|)$. Consider the set $R_s:=\varphi(|\ol{\st}(\sigma_s,\Tt)|)$, which is a compact definable neighbourhood of $Z_s$ in $X$. Observe that the family $\{R_s\}_{s\geq 1}$ is locally finite in $X$, because the family $\{|\ol{\st}(\sigma_s,\Tt)|\}_{s\geq 1}$ is locally finite in $|\Tt|$ and $\varphi|_{|\Tt|}:|\Tt|\to X$ is a homeomorphism. 

Let $s\geq 1$ be a fixed integer. The number of indices $r\geq 1$ such that $Z_r\cap Z_s=\varnothing$ if finite. In fact, $Z_r\cap Z_s\neq\varnothing$ if and only if $\sigma_r\cap \sigma_s\neq \varnothing$, because $\varphi$ is a homeomorphism, and this happens if and only if $\sigma_r\in\ol{\st}(\sigma_s,\Tt)$, because $\{\sigma_r\}_{r\geq 1}$ is the family $\Tt_{\max}$ of maximal simplices of $\Tt$. In particular, $Z_r\cap Z_s\neq\varnothing$ if and only if $Z_r\subset R_s$. Define the set 
\begin{equation}\label{Xi}
\Xi_s:=\{r\geq 1 : Z_r\cap Z_s\neq\varnothing\}=\{r\geq 1 : Z_r\subset R_s\}=\{r\geq 1 : \sigma_r\in\ol{\st}(\sigma_s,\Tt)\},
\end{equation}
where the last equality follows by the fact that $\varphi$ is a homeomorphism. Let $k_s$ be the (finite) cardinality of $\Xi_s$. Clearly, $k_s\geq 1$, because $s\in \Xi_s$. Define the set $R^{(2)}_s:=\varphi(|\ol{\st}^{(2)}(\sigma_s,\Tt)|)$, which is a compact definable subset of $X$. We claim: \textit{$R_s\subset R_r^{(2)}$ for each $r\in\Xi_s$.}

As $\varphi$ is a homeomorphism, it is enough to show that $|\ol{\st}(\sigma_s,\Tt)|\subset |\ol{\st}^{(2)}(\sigma_r,\Tt)|$ for each $r\in \Xi_s$. By \eqref{Xi}, $\Xi_s=\{r\geq 1 : \sigma_r\in\ol{\st}(\sigma_s,\Tt)\}$. Let $r\in \Xi_s$, as $\sigma_r\in\ol{\st}(\sigma_s,\Tt)$, then $\sigma_s\in\ol{\st}(\sigma_r,\Tt)$, so $|\ol{\st}(\sigma_s,\Tt)|\subset |\ol{\st}^{(2)}(\sigma_r,\Tt)|$, as claimed.

For each $s\geq 1$ define the integer $K_s:=\max\{k_r : r\in \Xi_s\}$. Observe that by the proof of the previous claim it follows also that for each $s\geq 1$ we have $r\in \Xi_s$ if and only if $s\in\Xi_r$. We deduce that 
$$
K_r=\max\{k_{\ell}: \ell\in\Xi_r\}\geq \max\{k_{\ell}: \ell\in\Xi_r\cap \Xi_s\}\geq k_s
$$
for each $r\in\Xi_s$, because $s\in\Xi_r$ for each $r\in\Xi_s$, so $s\in\Xi_r\cap \Xi_s$ for each $r\in\Xi_s$. In particular, $K_r\geq 1$, because $k_s\geq 1$. For each $s\geq 1$ define the number
\begin{equation}\label{vepss}
\veps_s:=\frac{1}{K_s}\min\{\veps(y) : y\in R^{(2)}_s\},
\end{equation}
which is well-defined and strictly positive, because $R^{(2)}_s$ is a compact set and $\veps$ a strictly positive continuous function. As $R_s\subset R_r^{(2)}$ for each $s\geq 1$ and $r\in\Xi_s$, we have that 
$$
\min\{\veps(y) : y\in R^{(2)}_r\}\leq\veps(x)
$$
for each $s\geq 1$, $r\in \Xi_s$ and $x\in R_s$. As $\tfrac{1}{K_r}\leq \tfrac{1}{k_s}$ for each $s\geq 1$ and each $r\in \Xi_s$, because $K_r\geq k_s$, we deduce that 
\begin{equation}\label{stimaVeps}
\sum_{r\in \Xi_s}\veps_r=\sum_{r\in \Xi_s}\frac{1}{K_r}\min\{\veps(y) : y\in R^{(2)}_r\}\leq \frac{1}{k_s}\sum_{r\in \Xi_s}\veps(x)=\frac{k_s}{k_s}\veps(x)=\veps(x)
\end{equation}
for each $s\geq 1$ and each $x\in R_s$.

%2
\noindent{\sc Step 2. Construction of a suitable family of weak retractions.} As $|\Ll|\subset\R^m$ is locally compact, then it is locally closed in $\R^m$. Thus, there exists an open neighbourhood $W\subset \R^m$ of $|\Ll|$ in which $|\Ll|$ is closed. By \cite[Prop.3.1, Rmk.3.2]{pa2}, there exists a locally definable triangulation $\Ww$ of $W$ such that $|\Ww|=W$ and $\Ll$ is a closed subcomplex of $\Ww$. We substitute $\Ww$ with its first barycentric subdivision, and consequently $\Ll$ with its first barycentric subdivision, so that $\Ll$ is still a closed subcomplex of $\Ww$. Let $\Ww^*:=\ol{\st}(|\Ll|,\Ww)$. Clearly, $\Ll$ is a closed subcomplex of $\Ww^*$. By \cite[Prop.8.3.3]{vdD} and its proof, there exists a locally definable retraction $\rho_0:|\Ww^*|\to |\Ll|$ such that for each $x\in |\Ww^*|$ the segment 
$$
\s_x:=\{(1-t)x+t\rho_0(x) : t\in [0,1]\}
$$ 
lies entirely in the simplex $\tau$ of $\Ww^*$ that contains the point $x$. Define $W^*:=|\st(|\Ll|,\Ww)|$, which is an open neighbourhood of $|\Ll|$ in $W=|\Ww|$, so an open neighbourhood of $|\Ll|$ in $\R^m$, because $W$ is open in $\R^m$. In order to lighten the notation, we substitute $\Ww$ with $\Ww^*$, so that $\Ww=\ol{\st}(|\Ll|,\Ww)$ and $W=|\st(|\Ll|,\Ww)|$. Observe that $|\Ww|\setminus W\neq \varnothing$.

Let $s\geq 1$ be a fixed integer. Let $\veps_s>0$ be the number introduced in \eqref{vepss}. By \cite[Thm.16.4]{mu}, there exists a locally finite subdivision $\Ww^*_s$ of $\Ww$ such that $\diam(\tau)<\tfrac{1}{3}\veps_s$ for each $\tau\in \Ww^*_s$. Clearly, $|\Ww^*_s|=|\Ww|$. By Paw\l{}ucki's desingularization \cite[Thm.1.1, Thm.1.2]{pa2}, there exists a locally definable triangulation $(\Ww_s,\psi_s)$ of $|\Ww|$ such that
\begin{itemize}
\item[\rm{(i)}] $\Ww_s$ is a locally finite refinement of $\Ww^*_s$,
\item[\rm{(ii)}] $\psi_s:|\Ww_s|=|\Ww|\to |\Ww|$ is of class $\Cont^p$,
\item[\rm{(iii)}] $\psi_s(\tau)=\tau$ for each $\tau\in \Ww^*_s$,
\item[\rm{(iv)}] $\rho_s:=\rho_0\circ\psi_s:|\Ww_s|=|\Ww|\to |\Ll|$ is of class $\Cont^p$.
\end{itemize}
As $\Ww^*_s$ is a refinement of $\Ww$, we have that each simplex $\tau$ of $\Ww$ is a union of simplices of $\Ww^*_s$. Thus, by (iii), we deduce also
 \begin{itemize}
\item[\rm{(v)}] $\psi_s(\tau)=\tau$ for each $\tau\in \Ww$.
\end{itemize}
As $\Ll$ is a subcomplex of $\Ww$, by (v), we have $\psi_s(x)\in |\Ll|$ for each $x\in |\Ll|$. As $\rho_0(x)=x$ for each $x\in |\Ll|$, because $\rho_0$ is a retraction onto $|\Ll|$, we deduce
 \begin{itemize}
\item[\rm{(vi)}] $\rho_s(x)=\rho_0(\psi_s(x))=\psi_s(x)$ for each $x\in |\Ll|$,
\end{itemize}
By (i), (iii), (vi) and the fact that $\diam(\tau)<\tfrac{1}{3}\veps_s$ for each $\tau\in \Ww^*_s$, we have
\begin{itemize}
\item[\rm{(vii)}] $\|x-\rho_s(x)\|_m=\|x-\psi_s(x)\|_m<\tfrac{1}{3}\veps_s$ for each $x\in |\Ll|$.
\end{itemize}

Let $x\in  |\Ww|$ and $\tau$ a maximal simplex of $\Ww$ such that $x\in \tau$. As $\Ww=\ol{\st}(|\Ll|,\Ww)$, $\Ll$ is a subcomplex of $\Ww$ and $\tau$ is a maximal simplex of $\Ww$, then the intersection $\tau_0:=\tau\cap |\Ll|$ is a face of $\tau$. By (v), $\psi_s(x)\in \tau$ for each $s\geq 1$. As for each $s\geq 1$ the segment  
$$
\s_{\psi_s(x)}:=\{(1-t)\psi_s(x)+t\rho_0(\psi_s(x)) : t\in [0,1]\}=\{(1-t)\psi_s(x)+t\rho_s(x) : t\in [0,1]\}
$$ 
lies entirely in $\tau$ (because $\tau$ is a simplex of $\Ww$ that contains $\psi_s(x)$) and $\rho_0(\psi_s(x))\in |\Ll|$, we deduce that $\rho_s(x)=\rho_0(\psi_s(x))\in \tau\cap|\Ll|=\tau_0$ for each $s\geq 1$. In particular, for each $x\in |\Ww|$, there exists a simplex $\tau_0$ of $\Ll$ such that $\rho_s(x)\in \tau_0$ for each $s\geq 1$. Thus, as the simplices of $\Ll$ are convex sets, we deduce the following crucial property:
\begin{itemize}
\item[\rm{(viii)}] for each choice of finitely many indices $s_1,\ldots,s_k\geq 1$ we have 
$$
t_1\rho_{r_1}(x)+\ldots+t_k\rho_{s_k}(x)\in|\Ll|
$$ 
for each $x\in |\Ww|$ and each $t_1\geq 0,\ldots,t_k\geq 0$ such that $t_1+\ldots+t_k=1$.
\end{itemize}

%3
\noindent{\sc Step 3. Weak retraction operator.} For each $s\geq 1$ let $Z_s$ and $R_s$ be the compact definable sets introduced in {\sc Step 1}. For each $s\geq 1$ let $U_s\subset \Omega$ be an open definable neighbourhood of $Z_s$ such that $U_s\cap X\subset R_s$. Observe that $\{U_s\cap X\}_{s\geq 1}$ is an open definable covering of $X$. This follows by \eqref{unione}, because $Z_s\subset U_s$ for each $s\geq 1$. Moreover, as $U_s\cap X\subset R_s$ for each $s\geq 1$, then the family $\{U_s\cap X\}_{s\geq 1}$ is locally finite in $X$, because the family $\{R_s\}_{s\geq 1}=\{\varphi(|\ol{\st}(\sigma_s,\Tt)|)\}_{s\geq 1}$ is locally finite in $X$. Observe that $U_s\cap X$ is definable for each $s\geq 1$, because $\cl(U_s)\cap X\subset R_s$ is compact, so definable. Let $\{\theta_s\}_{s\geq 1}$ be a partition of unity subordinated to the open covering $\{U_s\cap X\}_{s\geq 1}$ made of definable functions of class $\Cont^p$. For each $s\geq 1$ let $\rho_s:|\Ww|\to |\Ll|$ be the locally definable map of class $\Cont^p$ introduced in {\sc Step 2}, where $\Ww=\ol{\st}(|\Ll|,\Ww)$. Recall that $W=|\st(|\Ll|,\Ww)|\subset |\Ww|$. We define the operator
$$
\rho:\Cont^0(X, W)\to \Cont^0(X,\R^m), \quad h\mapsto \Big(x\mapsto \rho[h](x):=\sum_{x\in U_s\cap X}\theta_s(x)\rho_s(h(x))\Big).
$$
We start by showing:
\begin{itemize}
\item[\rm{(i)}] \textit{If $h\in \Cont^p(X,W)$ and $h$ is locally definable, then $\rho[h]\in \Cont^p(X,\R^m)$ and $\rho[h]$ is locally definable.}
\end{itemize} 

If $h\in\Cont^p(X,W)$, then $\rho[h]\in \Cont^p(X,\R^m)$, because in this case $\rho[h]$ is locally a finite sum of compositions of maps of class $\Cont^p$. For the same reason, if $h$ is locally definable, then $\rho[h]$ is also locally definable. Next, we show: 
\begin{itemize}
\item[\rm{(ii)}]  \textit{$\rho[h](X)\subset |\Ll|$ for each $h\in\Cont^0(X,W)$, that is, $\rho:\Cont^0(X, W)\to \Cont^0(X,|\Ll|)$.}
\end{itemize}

Let $h\in\Cont^0(X,W)$ and $x\in X$. As the covering $\{U_s\cap X\}_{s\geq 1}$ is locally finite, for each $x\in X$ there exist only finitely many indices $r_1,\ldots,r_k\geq 1$ such that $x\in U_{r_j}\cap X$, so $\theta_s(x)=0$ if $s\not\in\{r_1,\ldots,r_k\}$. Thus,
$$
\rho[h](x)=\theta_{r_1}(x)\rho_{r_1}(h(x))+\ldots+\theta_{r_k}(x)\rho_{r_k}(h(x)).
$$
As $h(x)\in W\subset |\Ww|$, $\theta_{r_1}(x)\geq 0,\ldots,\theta_{r_k}(x)\geq 0$ and $\theta_{r_1}(x)+\ldots+\theta_{r_k}(x)=1$, by property (viii) of {\sc Step 2}, we conclude that $\rho[h](x)\in |\Ll|$, as required. Then, we show: 
\begin{itemize}
\item[\rm{(iii)}]  \textit{Let $h\in\Cont^0(X,W)$ be such that $h(X)\subset |\Ll|$, then $\|h(x)-\rho[h](x)\|_m<\tfrac{1}{3}\veps(x)$ for each $x\in X$.} 
\end{itemize}

For each $s\geq 1$ let $\Xi_s$ be the set of indices introduced in \eqref{Xi}. Let $x\in X$. By \eqref{unione}, there exists $s\geq 1$ such that $x\in Z_s$. We claim: \textit{If $x\in U_r\cap X$, then $r\in\Xi_s$.} Let $r\geq 1$ be such that $x\in U_r\cap X$. Clearly, $U_r\cap Z_s=(U_r\cap X)\cap Z_s\neq \varnothing$, because $x\in U_r\cap Z_s$. As $U_r\cap X$ is open in $X$ and $\varphi$ is a homeomorphism, then the intersection $\varphi(U_r\cap Z_s)=\varphi(U_r)\cap \varphi(Z_s)=\varphi(U_r)\cap \sigma_r$ has non-empty interior in $|\Tt|$, because $\sigma_r$ is a maximal simplex of $\Tt$, so $\ov{\sigma}_r$ is open in $|\Tt|$. Thus, $|\ol{\st}(\sigma_r,\Tt)|\cap \sigma_s$ has non-empty interior in $|\Tt|$, because $\varphi(U_r)\cap \sigma_s=\varphi(U_r\cap X)\cap \sigma_s\subset |\ol{\st}(\sigma_r,\Tt)|\cap \sigma_s$. This implies that $\sigma_r\in\ol{\st}(\sigma_s,\Tt)$, because $\sigma_r$ is a maximal simplex of $\Tt$. We conclude that $r\in \Xi_s$, because, by \eqref{Xi}, $\Xi_s=\{r\geq 1 :\sigma_r\in\ol{\st}(\sigma_s,\Tt)\}$, as claimed. 

Let $x\in X$. By \eqref{unione}, there exists $s\geq 1$ such that $x\in Z_s$. By the previous claim, we deduce
\begin{equation}\label{buoniindici}
\theta_r(x)=0
\end{equation}
for each $r\geq 1$ such that $r\not\in \Xi_s$. As $h(X)\subset |\Ll|$, by (vii) in {\sc Step 2} and \eqref{stimaVeps}, we deduce
\begin{align*}
\|h(x)-\rho[h](x)\|_m&=\Big\|h(x)-\sum_{r\in \Xi_s}\theta_r(x)\rho_r(h(x))\Big\|_m=\Big\|\sum_{r\in \Xi_s}\theta_r(x)(h(x)-\rho_r(h(x))\Big\|_m\\
&\leq \sum_{r\in \Xi_s}\theta_r(x)\|h(x)-\rho_{r}(h(x))\|_m<\frac{1}{3}\sum_{r\in \Xi_s}\veps_r\leq \frac{1}{3}\veps(x),
\end{align*}
as required. Finally, we show:
\begin{itemize}
\item[\rm{(iv)}] \textit{For each $g\in \Cont^0(X,W)$ there exists a strictly positive continuos function $\delta:X\to \R$ such that if $h\in \Cont^0(X,W)$ is such that $\|g(x)-h(x)\|_m<\delta(x)$ for each $x\in X$, then 
$$
\|\rho[g](x)-\rho[h](x)\|_m<\frac{1}{3}\veps(x)
$$
for each $x\in X$.}
\end{itemize}

Observe that $\Cont^0(X,W)\subset \Cont^0(X,|\Ww|)$, because $W\subset |\Ww|$. Let $g\in \Cont^0(X,W)$. For each $s\geq 1$ the map $(\rho_s)_*:\Cont^0(X, |\Ww|)\to \Cont^0(X,|\Ll|)$ is continuous with respect to the Whitney topology. Thus, there exists a strictly positive continuous function $\delta^*_s:X\to\R$ such that if $h\in\Cont^0(X,|\Ww|)$ is a continuous function such that $\|g(x)-h(x)\|_m<\delta^*_s(x)$ for each $x\in X$, then $\|\rho_s(g(x))-\rho_s(h(x))\|_m<\tfrac{1}{3}\veps_s$ for each $x\in X$. Define $\delta_s^*:=\min\{\delta_s^*(x) : x\in  Z_s\}$, which is a well-defined strictly positive number for each $s\geq 1$, because $Z_s$ is compact. Recall that $\Tt_{\max}=\{\sigma_s\}_{s\geq 1}$. Define the function
$$
\Delta:\Tt_{\max}\to \R, \quad \sigma_s\mapsto \Delta(\sigma_s):=\min\{\delta^*_r : r\in\Xi_s\},
$$
which is strictly positive. By Lemma \ref{piccolo}, there exists a strictly positive continuous function $\delta^*:|\Tt|\to \R$ such that $\delta^*(y)<\Delta(\sigma_s)$ for each $\sigma_s\in\Tt_{\max}$ such that $y\in \sigma_s$. Define $\delta:X\to \R$ as $\delta:=\delta^*\circ\varphi^{-1}$, which is a strictly positive continuous function. Let $h\in \Cont^0(X,W)\subset \Cont^0(X,|\Ww|)$ be such that $\|g(x)-h(x)\|_m<\delta(x)$ for each $x\in X$. Let $x\in X$ and $s\geq 1$ be such that $x\in Z_s$ (we are using again \eqref{unione}), so $\varphi^{-1}(x)\in\varphi^{-1}(Z_s)=\sigma_s$. We deduce that,
$$
\|g(x)-h(x)\|_m<\delta(x)=\delta^*(\varphi^{-1}(x))<\Delta(\sigma_s)=\min\{\delta^*_r : r\in\Xi_s\}\leq \delta_r^*\leq \delta_r^*(x)
$$
for each $r\in \Xi_s$. Thus, $\|\rho_r(g(x))-\rho_{r}(h(x))\|_m<\tfrac{1}{3}\veps_r$ for each $r\in \Xi_s$. By \eqref{stimaVeps} and \eqref{buoniindici}, we conclude that
\begin{align*}
\|\rho[g](x)-\rho[h](x)\|_m&=\Big\|\sum_{r\in \Xi_s}\theta_r(x)\rho_r(g(x))-\sum_{r\in \Xi_s}\theta_r(x)\rho_r(h(x))\Big\|_m\\
&\leq \sum_{r\in\Xi_s}\theta_r(x)\|\rho_r(g(x))-\rho_{r}(h(x))\|_m<\frac{1}{3}\sum_{r\in \Xi_s}\veps_r\leq \frac{1}{3}\veps(x),
\end{align*}
as required.

It is worthwhile to remark that, even if the operator $\rho$ depends by the choice of the open covering $\{U_s\cap X\}_{s\geq 1}$ and the partition of unity $\{\theta_s\}_{s\geq 1}$, all the results of this step remain true for every choice of an open covering $\{U_s\cap X\}_{s\geq 1}$ made of definable sets and a partition of unity $\{\theta_s\}_{s\geq 1}$ made of definable functions of class $\Cont^p$, as long as $U_s\cap X\subset R_s$ for each $s\geq 1$.

%4
\noindent{\sc Step 4. Main reduction.} Let $\{U_s\cap X\}_{s\geq 1}$ be an open covering of $X$ made of definable sets and such that $U_s\cap X\subset R_s$ for each $s\geq 1$, and $\{\theta_s\}_{s\geq 1}$ a partition of unity subordinated to the open covering $\{U_s\cap X\}_{s\geq 1}$ made of definable functions of class $\Cont^p$. For each $s\geq 1$ let $\psi_s:|\Ww|\to|\Ww|$ be the locally definable homeomorphism of class $\Cont^p$ introduced in {\sc Step 2}. We define the map
\begin{equation}\label{f*}
f^*:X\to \R^m,\quad x\mapsto \sum_{x\in U_s\cap X}\theta_s(x)\psi_s^{-1}(f(x)).
\end{equation}
Clearly, the map $f^*$ is well-defined, because $f(X)\subset |\Ll|\subset |\Ww|$. Moreover, $f^*$ is continuous and locally definable, because locally it is a finite sum of compositions of continuous locally definable maps. We start by showing: 
\begin{itemize}
\item[\rm{(i)}] \textit{$f^*(X)\subset |\Ll|$.}
\end{itemize}

Let $x\in X$. By \eqref{unione}, there exists an integer $s\geq 1$ such that $x\in Z_s$. By \eqref{buoniindici}, we have
\begin{equation}\label{intermedio}
f^*(x)=\sum_{r\in\Xi_s}\theta_r(x)\psi_r^{-1}(f(x)).
\end{equation}
As $f(x)\in |\Ll|$ and $\Ll$ is a subcomplex of $\Ww$, then there exists a simplex $\tau_0\in \Ll\subset \Ww$ such that $f(x)\in \tau_0$. By (v) in {\sc Step 2}, $\psi_r(\tau)=\tau$ for each $r\geq 1$ and $\tau\in \Ww$, so $\psi_r^{-1}(\tau)=\tau$ for each $r\geq 1$ and $\tau\in\Ww$, because $\psi_r$ is a homeomorphism. In particular, $\psi_r^{-1}(f(x))\in \tau_0$ for each $r\in \Xi_s$. As $\tau_0$ is a convex subset of $\R^m$, by \eqref{intermedio}, we deduce that $f^*(x)\in |\Ll|$, as required. Next, we show:
\begin{itemize}
\item[\rm{(ii)}] \textit{$\|f(x)-f^*(x)\|<\tfrac{1}{3}\veps(x)$ for each $x\in X$.}
\end{itemize}

By (vii) in {\sc Step 2}, we have $\|y-\psi_r(y)\|_m<\tfrac{1}{3}\veps_r$ for each $y\in |\Ll|$ and each $r\geq 1$. As $\psi_r$ is a homeomorphism, then $\|y-\psi^{-1}_r(y)\|_m<\tfrac{1}{3}\veps_r$ for each $y\in |\Ll|$ and each $r\geq 1$. Let $x\in X$. As $f(X)\subset |\Ll|$, then $\|f(x)-\psi_r^{-1}(f(x))\|_m<\tfrac{1}{3}\veps_r$ for each $r\geq 1$. By \eqref{unione}, there exists an integer $s\geq 1$ such that $x\in Z_s$. By \eqref{stimaVeps} and \eqref{buoniindici}, we conclude that
\begin{align*}
\|f(x)-f^*(x)\|_m&=\Big\|\sum_{r\in \Xi_s}\theta_r(x)f(x)-\sum_{r\in \Xi_s}\theta_r(x)\psi^{-1}_r(f(x))\Big\|_m\\
&\leq\sum_{r\in \Xi_s}\theta_r(x)\|f(x)-\psi_r^{-1}(f(x))\|_m<\frac{1}{3}\sum_{r\in\Xi_s}\veps_r\leq\frac{1}{3}\veps(x),
\end{align*}
as required.

By (iv) in {\sc Step 3}, there exists a strictly positive continuous function $\delta:X\to \R$ such that if $h\in \Cont^{0}(X,W)$ is a continuous map such that $\|f^*(x)-h(x)\|_m<\delta(x)$ for each $x\in X$, then $\|\rho[f^*](x)-\rho[h](x)\|_m<\tfrac{1}{3}\veps(x)$ for each $x\in X$. Assume that $h:X\to W$ is a locally definable map of class $\Cont^p$ such that $\|f(x)-h(x)\|_m<\delta(x)$ for each $x\in X$. By (i), we have $f^*(X)\subset |\Ll|$. Thus, by (iii) in {\sc Step 3}, we have $\|f^*(x)-\rho[f^*](x)\|_m<\tfrac{1}{3}\veps(x)$ for each $x\in X$. By (ii), we deduce
\begin{align*}
\|f(x)-\rho[h](x)\|_m&\leq \|f(x)-f^*(x)\|_m+\|f^*(x)-\rho[f^*](x)\|_m+\|\rho[f^*](x)-\rho[h](x)\|_m\\
&<\frac{1}{3}\veps(x)+\frac{1}{3}\veps(x)+\frac{1}{3}\veps(x)=\veps(x)
\end{align*}
for each $x\in X$. By (i) in {\sc Step 4}, as $h:X\to W$ is a locally definable map of class $\Cont^p$, then $\rho[h]$ is also a locally definable map of class $\Cont^p$. By (ii) in {\sc Step 3}, it holds $\rho[h](X)\subset |\Ll|$. 

Thus, in order to conclude, we are reduced to show: \textit{There exists a locally definable map $h:X\to W$ of class $\Cont^p$ such that $\|f^*(x)-h(x)\|_m<\delta(x)$ for each $x\in X$ and $|\Ll|\subset\rho[h](X)$.}

%5
\noindent{\sc Step 5. Local approximation.} Let $W=|\st(|\Ll|,\Ww)|$ be the open neighbourhood of $|\Ll|$ in $\R^m$ introduced in {\sc Step 2}. Let $\partial W:=\cl(W)\setminus W$, which is a closed subset of $\R^m$, because $W$ is open. As $W=|\st(|\Ll|,\Ww)|$, then $|\Ww|=|\ol{\st}(|\Ll|,\Ww)|\subset \cl(W)$. Thus, $|\Ww|\setminus W\subset \cl(W)\setminus W=\partial W$. In particular, $\partial W\neq\varnothing$. Let $f^*$ be the map introduced in \eqref{f*}. By (ii) in {\sc Step 4}, we have $f^*(X)\subset |\Ll|$. Consider the function 
$$
\mu:X\to \R, \quad x\mapsto \dist(f^*(x),\partial W),
$$
which is continuous and strictly positive, because $f^*(x)\in |\Ll|\subset W$ for each $x\in X$ and $\partial W\neq \varnothing$ is a closed subset of $\R^m$ disjoint from $|\Ll|$. As $W\subset \R^m$ is open, then for each $y\in \R^m$ if $\|f^*(x)-y\|<\mu(x)$ for some $x\in X$, then $y\in W$. In particular, if $h:X\to \R^m$ is a continuous map such that $\|f^*(x)-h(x)\|_m<\mu(x)$ for each $x\in X$, then $h(X)\subset W$. Let $\delta:X\to \R$ be the strictly positive continuous function introduced in (iv) in {\sc Step 4}. In order to lighten the notation, we substitute $\delta$ with $\min\{\delta,\mu\}$, so that if $h:X\to \R^m$ is a continuous map such that $\|f^*(x)-h(x)\|_m<\delta(x)$ for each $x\in X$, then
\begin{equation}\label{stainW}
h(X)\subset W.
\end{equation}

We proceed analogously to {\sc Step 1} and for each $s\geq 1$ we define the number
$$
\delta_{1,s}:=\frac{1}{K_s}\min\{\delta(x) : x\in R^{(2)}_s\},
$$
which is well-defined and strictly positive, because $R^{(2)}_s$ is a compact set and $\delta$ a strictly positive continuous function. Arguing as in \eqref{stimaVeps} we find
\begin{equation}\label{stimaDelta}
\sum_{r\in \Xi_s}\delta_{1,r}\leq \delta(x)
\end{equation}
for each for each $s\geq 1$ and each $x\in R_s$. 

Let $s\geq 1$ be a fixed integer. As $Z_s$ is a compact definable set and the restriction $(\psi_s^{-1}\circ f)|_{Z_s}$ is a definable map, by \cite[Thm.1.4]{ca}, there exists a definable map $h_s:Z_s\to \R^m$ of class $\Cont^p$ such that $h_s(Z_s)=\psi_s^{-1}(f(Z_s))$ and 
\begin{equation}\label{stimaH1}
\|\psi_s^{-1}(f(x))-h_s(x)\|_m<\frac{1}{2}\delta_{1,s}
\end{equation}
for each $x\in Z_s$. As $Z_s$ is a closed subset of $\R^n$, by \cite[Lem.2.2]{cf3}, there exists an open definable neighbourhood $\Omega_s^*$ of $Z_s$ in $\Omega$ such that $\cl(\Omega_s^*)\subset \Omega$. It is worthwhile to remark that even if \cite[Lem.2.2]{cf3} is presented only for semialgebraic sets, its statement and proof work in the exact same way for definable sets. As $h_s:Z_s\to \R^m$ is definable and of class $\Cont^p$, then there exists an open definable neighbourhood $\Omega_s$ of $Z_s$ in $\Omega^*_s\subset \Omega$ and a definable extension $H_s:\cl(\Omega_s)\to \R^m$ of class $\Cont^p$ of $h_s$ to $\cl(\Omega_s)$ (we are implicitly using again \cite[Lem.2.2]{cf3}). Observe that $\Omega_s$ is an open definable neighbourhood of $Z_s$ in $\Omega$ such that $\cl(\Omega_s)\subset \cl(\Omega_s^*)\subset \Omega$. Up to shrinking $\Omega_s$ if necessary (and using again \cite[Lem.2.2]{cf3}), we may assume:
\begin{enumerate}[label=(\roman*)]
\item $\cl(\Omega_s)$ is compact. This is possible because $Z_s$ is a compact definable set.
\item $\cl(\Omega_s)\cap X\subset R_s$. This is possible because $R_s$ is a compact definable neighbourhood of $Z_s$ in $X$.
\item $\|\psi_s^{-1}(f(x))-H_s(x)\|_m<\tfrac{1}{2}\delta_{1,s}$ for each $x\in \cl(\Omega_s)\cap X$. This is possible because \eqref{stimaH1} holds on $Z_s$ and $\cl(\Omega_s)\cap X$ is a definable neighbourhood of $Z_s$ in $X$.
\end{enumerate}

%6
\noindent{\sc Step 6. Continuous `widening' of the sets $Z_s$.} In this step we want to suitably `widen' $Z_s$ inside $\Omega_s$. In order to do that, we take advantage of the triangulation $(\pol,\varphi)$ and its piecewise linear structure. Fix an integer $s\geq 1$. As $\cl(\Omega_s)$ is compact and $H_s:\cl(\Omega_s)\to \R^m$ continuous, then it is uniformly continuous. Thus, there exists $\delta_{2,s}>0$ such that for each $x,y\in \cl(\Omega_s)$ if $\|x-y\|_n<\delta_ {2,s}$, then 
\begin{equation}\label{stimaH2}
\|H_s(x)-H_s(y)\|_m<\frac{1}{2}\delta_{1,s}.
\end{equation}
As $\varphi:|\pol|\to \Omega$ is a locally definable homeomorphism and $\cl(\Omega_s)$ is a compact definable neighbourhood of $Z_s$ in $\Omega$, then $V_s:=\varphi^{-1}(\cl(\Omega_s))$ is a compact definable neighbourhood of $\sigma_s=\varphi^{-1}(Z_s)$ in $|\pol|$. As the restriction $\varphi|_{V_s}$ is continuous and $V_s$ is compact, then $\varphi|_{V_s}$ is uniformly continuous. In particular, there exists $\eps_s>0$ such that for each $x,y\in V_s$ if $\|x-y\|_n<\eps_{s}$, then 
\begin{equation}\label{phi}
\|\varphi(x)-\varphi(y)\|_n<\frac{1}{4}\delta_{2,s}.
\end{equation}

Let $\partial \Omega_s:=\cl(\Omega_s)\setminus \Omega_s$. As $\cl(\Omega_s)$ is a compact neighbourhood of the compact set $Z_s$, then
$$
\mu_s:=\frac{1}{2}\dist(Z_s, \partial \Omega_s):=\frac{1}{2}\min\{\dist(x,y) : x\in Z_s, y\in \partial\Omega_s\}
$$
is a well-defined strictly positive number. We substitute $\delta_{2,s}>0$ with $\min\{\delta_{2,s},\mu_s\}>0$. so that the set
\begin{equation}\label{insiemiEs}
E_s:=\{y\in \R^n : \|x-y\|_n<\delta_{2,s} \text{ for some } x\in Z_s\}
\end{equation}
is an open definable neighbourhood of $Z_s$ in $\Omega_s$, and, in particular, $E_s\subset \Omega_s$.

Recall that $\Tt_{\max}=\{\sigma_s\}_{s\geq 1}$. Let $\Pi_s$ be the affine space of $\R^n$ generated by $\sigma_s$ and $b_s$ the barycentre of $\sigma_s$. For each $\eps>1$ consider the homothety 
$$
\omega_{s,\eps}:\Pi_s\to \Pi_s, \quad x\mapsto b_s+\eps(x-b_s)
$$
of $\Pi_s$ of centre the barycentre $b_s$ of $\sigma_s$ and radius $\eps$. As $\sigma_s$ is compact and $V_s\cap \Pi_s$ a neighbourhood of $\sigma_s$ in $\Pi_s$, then there exists $\eps^*_s>1$ such that the homothety $\omega_{s,\eps^*_s}$ satisfies $\omega_{s,\eps^*_s}(\sigma_s)\subset V_s\cap \Pi_s$ and $\|x-\omega_{s,\eps^*_s}(x)\|_n<\eps_{s}$ for each $x\in \sigma_s$. By \eqref{phi}, $\|\varphi(x)-\varphi(\omega_{s,\eps^*_s}(x))\|_n<\tfrac{1}{4}\delta_{2,s}$ for each $x\in \sigma_s$. Thus, the map $\eta_s:=\varphi\circ\omega_{s,\eps^*_s}\circ\varphi^{-1}:Z_s\to \R^n$ satisfies
\begin{equation}\label{stimaeta1}
\|x-\eta_s(x)\|_n=\|\varphi(\varphi^{-1}(x))-\varphi(\omega_{s,\eps^*_s}(\varphi^{-1}(x)))\|_n<\frac{1}{4}\delta_{2,s}
\end{equation}
for each $x\in Z_s$, because $\varphi^{-1}(x)\in \sigma_s$. By \eqref{insiemiEs}, we have $\eta_s(Z_s)\subset E_s$, so $\eta_s(Z_s)\subset \Omega_s$, because $E_s\subset \Omega_s$.

\noindent{\sc Step 7. Smoothing of the `widening' to the class $\Cont^p$.} Let $s\geq 1$ be a fixed integer. Observe that $\sigma_s^*:=\omega_{s,\eps^*_s}^{-1}(\sigma_s)$ is contained in $\ov{\sigma}_s$, because $\eps_s>1$, so $\omega_{s,\eps^*_s}^{-1}$ is the homothety of centre $b_s$ and radius $0<\tfrac{1}{\eps^*_s}<1$. Define $Z_s^*:=\varphi(\sigma_s^*)$, which is a compact definable set strictly contained in $Z_s$. We show in this step: \textit{There exists an open definable neighbourhood $U_s$ of $Z_s$ in $\Omega_s$ and a definable map $\Gamma_s:U_s\to \Omega_s$ of class $\Cont^p$ such that $\Gamma_s(Z_s^*)=Z_s$ and $\|x-\Gamma_s(x)\|<\delta_{2,s}$ for each $x\in U_s$.}

As $\eta_s(Z_s^*)=\varphi(\omega_{s,\eps^*_s}(\varphi^{-1}(Z^*_s)))=\varphi(\omega_{s,\eps^*_s}(\varphi^{-1}(\varphi(\omega_{s,\eps^*_s}^{-1}(\varphi^{-1}(Z_s))))))=Z_s$ and $Z_s^*$ is a compact definable set, then, by \cite[Thm.1.4]{ca}, there exists a definable map $\gamma_{1,s}:Z^*_s\to Z_s$ of class $\Cont^p$ such that $\gamma_{1,s}(Z_s^*)=Z_s$ and $\|\eta_s(x)-\gamma_{1,s}(x)\|_n<\tfrac{1}{4}\delta_{2,s}$ for each $x\in Z_s^*$. By \eqref{stimaeta1} and the fact that $Z_s^*\subset Z_s$, we deduce that 
\begin{equation}\label{stimagamma1}
\|x-\gamma_s(x)\|_n\leq\|x-\eta_s(x)\|_n+\|\eta_s(x)-\gamma_s(x)\|_n<\frac{1}{4}\delta_{2,s}+\frac{1}{4}\delta_{2,s}=\frac{1}{2}\delta_{2,s}
\end{equation}
for each $x\in Z_s^*$. As the map $\gamma_{1,s}$ is definable and of class $C^p$, then there exist an open definable neighbourhood $U_{1,s}$ of $Z_s^*$ in $\Omega_s$ and a definable extension $\Gamma_{1,s}:U_{1,s}\to \R^n$ of class $\Cont^p$ of $\gamma_{1,s}$. Up to shrinking $U_{1,s}$ if necessary, we may assume that $\Gamma_{1,s}(U_{1,s})\subset \Omega_s$ (because $\Omega_s$ is an open definable neighbourhood of $Z_s^*$) and that $\|x-\Gamma_{1,s}(x)\|_n<\tfrac{1}{2}\delta_{2,s}$ for each $x\in U_{1,s}$ (because \eqref{stimagamma1} holds for each $x\in Z_s^*$). If $Z_s\subset U_{1,s}$, we conclude by taking $U_s:=U_{1,s}$ and $\Gamma_s:=\Gamma_{1,s}$.

In what follows we may assume that $Z_s\setminus U_{1,s}\neq \varnothing$, otherwise $Z_s\subset U_{1,s}$. Arguing as before, using again \cite[Thm.1.4]{ca}, we find a surjective definable map $\gamma_{2,s}:Z_s\to \eta_s(Z_s)\subset \Omega_s$ of class $\Cont^p$ such that $\|x-\gamma_{2,s}(x)\|_n<\tfrac{1}{4}\delta_{2,s}$ for each $x\in Z_s$. Moreover, we find an open definable neighbourhood $U_{2,s}$ of $Z_s$ in $\Omega_s$ and a definable extension $\Gamma_{2,s}:U_{2,s}\to \R^n$ of class $\Cont^p$ of $\gamma_{2,s}$ such that $\|x-\Gamma_{2,s}(x)\|_n<\tfrac{1}{2}\delta_{2,s}$ for each $x\in U_{2,s}$. 

As $Z_s^*$ is a proper subset of $Z_s$, then, up to shrinking $U_{1,s}$ if necessary, we may assume that $\cl(U_{1,s})\subset U_{2,s}$. By \cite[Lem.2.2]{cf3}, there exists an open definable neighbourhood  $U_{3,s}$ of $Z_s^*$ in $U_{1,s}$ such that $\cl(U_{3,s})\subset U_{1,s}$. Let $\{\lambda_1,\lambda_2\}$ be a definable partition of unity of class $\Cont^p$ subordinated to the open definable covering $\{U_{1,s},U_{2,s}\setminus \cl(U_{3,s})\}$ of $U_s:=U_{1,s}\cup U_{2,s}=U_{2,s}$. Define $\Gamma_s:=\lambda_1 \Gamma_{1,s}+\lambda_2\Gamma_{2,s}:U_s\to \R^n$, which is a definable map of class $\Cont^p$. We have
\begin{equation}\label{stimaGamma}
\|x-\Gamma_s(x)\|_n\leq \lambda_1(x)\|x-\Gamma_{1,s}(x)\|_n+\lambda_2(x)\|x-\Gamma_{2,s}(x)\|_n<\frac{1}{2}\delta_{2,s}+\frac{1}{2}\delta_{2,s}=\delta_{2,s}
\end{equation}
for each $x\in U_{s}$. Let $E_s$ be the set introduced in \eqref{insiemiEs}, which is an open definable neighbourhood of $Z_s$ in $\Omega_s$. By \eqref{stimaGamma}, we have that $\Gamma_s(U_s)\subset E_s$, so $\Gamma_s(U_s)\subset \Omega_s$, as required. 

\noindent{\sc Step 8. Suitable shrinking of the sets $U_s$.} Fix an integer $s\geq 1$. Recall that $Z_s=\varphi(\sigma_s)$, where $\sigma_s$ is a maximal simplex of $\Tt$ (see {\sc Step 1}). Let $Z_s^*$ and $U_s$ be the sets introduced in {\sc Step 7}. Recall that $Z_s^*$ is a suitable proper definable subset of $Z_s$ and that $U_s$ is a suitable open definable neighbourhood of $Z_s$ in $\Omega_s$. We show in this step: \textit{Up to shrinking $U_s$ if necessary, we may assume that $U_s\cap Z_r^*=\varnothing$ for each $r\neq s$.}

By the fact that $\sigma_r$ is a maximal simplex of $\Tt$ for each $r\geq 1$, we have that the relative interior $\ov{\sigma}_r$ of $\sigma_r$ in $|\Tt|$  satisfies $\sigma_s\cap\ov{\sigma}_r=\varnothing$ for each $r\neq s$. As $\varphi^{-1}(Z_r^*)\subset \ov{\sigma}_r$ for each $r\geq 1$, because $\varphi^{-1}(Z_r^*)=\sigma^*_r\subset \ov{\sigma}_r$ (see {\sc Step 7}), $\sigma_r=\varphi(Z_r)$ and $\varphi$ is a homeomorphism, we deduce that
\begin{equation}\label{disgiunti}
Z_s\cap Z^*_r=\varphi(\varphi^{-1}(Z_s))\cap \varphi(\varphi^{-1}(Z^*_r))=\varphi(\varphi^{-1}(Z_s)\cap \varphi^{-1}(Z^*_r))\subset \varphi(\sigma_s\cap \ov{\sigma}_r)=\varnothing.
\end{equation}
for each $r\neq s$. Recall that $R_s=\varphi(|\ol{\st}(\sigma_s,\Tt)|)$ and that $\Omega_s\cap X\subset R_s$ (see (ii) in {\sc Step 5}). Let $k_0:=s,k_1,\ldots,k_s$ be those integers such that $Z_{k_j}\subset R_s$ for $j=0,\ldots,s$. As $\Omega_s\cap X\subset R_s$ and $U_s\subset \Omega_s$, then $U_s\cap X\subset \Omega_s\cap X\subset R_s$. In particular, if $U_s\cap Z_r\neq \varnothing$, then $r\in\{k_0=s,k_1\ldots,k_s\}$. By \eqref{disgiunti}, we deduce
$$
Z_s\cap (Z^*_{k_1}\cup\ldots\cup Z^*_{k_s})=\varnothing.
$$
Thus, up to shrinking $U_s$ if necessary, we may assume that $U_s\cap Z_r^*=\varnothing$ for each $r\neq s$, as required.

\noindent{\sc Step 9. Global approximation.} Let $U_s$ be the definable sets introduced in {\sc Step 7}. By \eqref{unione} and by the fact that $Z_s\subset U_s$ for each $s\geq 1$, we have that $\{U_s\cap X\}_{s\geq 1}$ is an open covering of $X$ made of definable sets. Moreover, as $U_s\cap X\subset R_s$, then the family $\{U_s\cap X\}_{s\geq 1}$ is locally finite in $X$, because the family $\{R_s\}_{s\geq 1}=\{|\varphi(\ol{\st}(\sigma_s,\Tt))|\}_{s\geq 1}$ is locally finite in $X$. Let $\{\theta_s\}_{s\geq 1}$ be a partition of unity subordinated to the open covering $\{U_s\cap X\}_{s\geq 1}$ made of definable functions of class $\Cont^p$. For each $s\geq 1$ let $H_s:\cl(\Omega_s)\to \R^m$ be the locally definable map of class $\Cont^p$ introduced in {\sc Step 5} and $\Gamma_s:U_s\to \Omega_s$ the definable map of class $\Cont^p$ introduced in {\sc Step 7}. We define the map
$$
h:X\to \R^m, \quad x\mapsto \sum_{x\in U_s\cap X}\theta_s(x)H_s(\Gamma_s(x)).
$$
The map $h$ is locally definable, because locally it is a finite sum of definable maps. By Lemma \ref{compconn}, $h$ is also of class $\Cont^p$.

Let $f^*$ be the map introduced in \eqref{f*}. We show next: \textit{$\|f^*(x)-h(x)\|_m<\delta(x)$ for each $x\in X$.} For each $s\geq 1$ let $\Xi_s$ be the set of indices introduced in \eqref{Xi}. Let $x\in X$. By \eqref{unione}, there exists $s\geq 1$ such that $x\in Z_s$. As $U_r\cap X\subset \Omega_r\cap X\subset R_s$ for each $r\geq 1$ (see (ii) in {\sc Step 5} and recall that $U_r\subset \Omega_r$), by \eqref{buoniindici}, we have that $\theta_r(x)=0$ if $r\not\in \Xi_s$. We deduce that
$$
h(x)= \sum_{r\in \Xi_s}\theta_r(x)H_r(\Gamma_r(x))\quad \text{and}\quad f^*(x)=\sum_{r\in \Xi_s}\theta_r(x)\psi_r^{-1}(f(x)).
$$
As $U_r\cap X\subset \cl(\Omega_r)\cap X$ for each $r\in \Xi_s$, by (iii) in {\sc Step 5}, we have $\|\psi_r^{-1}(f(x))-H_r(x)\|_m<\tfrac{1}{2}\delta_{1,r}$ for each $r\in\Xi_s$. By \eqref{stimaH2} and \eqref{stimaGamma}, we have $\|H_r(x)-H_r(\Gamma_r(x))\|_m<\tfrac{1}{2}\delta_{1,r}$ for each $r\in \Xi_s$. Thus, by \eqref{stimaDelta}, we conclude that
\begin{align*}
\|f^*(x)-h(x)\|_m=&\Big\|  \sum_{x\in \Xi_s}\theta_r(x)\psi_r^{-1}(f(x))- \sum_{x\in \Xi_s}\theta_r(x)H_r(\Gamma_r(x))\Big\|_m\\
&\leq  \sum_{x\in \Xi_s}\theta_r(x)\|\psi_r^{-1}(f(x))-H_r(\Gamma_r(x))\|_m\\
&\leq  \sum_{x\in \Xi_s}\theta_r(x)\big(\|\psi_r^{-1}(f(x))-H_r(x)\|_m+\|H_r(x)-H_r(\Gamma_r(x))\|_m\big)\\
&< \sum_{x\in \Xi_s}\theta_r(x)\Big(\frac{1}{2}\delta_{1,r}+\frac{1}{2}\delta_{1,r}\Big)\leq \sum_{x\in \Xi_s}\delta_{1,r}\leq \delta(x),
\end{align*}
as required.

By \eqref{stainW}, as $\|f^*(x)-h(x)\|_m<\delta(x)$ for each $x\in X$, then $h(X)\subset W$. Thus, by {\sc Step 4}, in order to conclude our proof we are only left to show: $|\Ll|\subset \rho[h](X)$.

\noindent{\sc Step 10. Surjectivity of the approximating map.} Let $h:X\to W$ be the locally definable map of class $\Cont^p$ introduced in {\sc Step 9} and let 
$$
g:=\rho[h]:X\to |\Ll|, \quad x\mapsto \rho[h](x)=\sum_{x\in U_s\cap X}\theta_s(x)\rho_s\Big(\sum_{x\in U_s\cap X}\theta_s(x)H_s(\Gamma_s(x))\Big).
$$
It only remains to shows: $|\Ll|\subset g(X)$. 

Let $s\geq 1$ be a fixed integer. By {\sc Step 5}, we have $H_s(Z_s)=h_s(Z_s)=\psi_s^{-1}(f(Z_s))$. By {\sc Step 7}, we have $\Gamma_s(Z_s^*)=Z_s$. Let $\rho_0:|\Ww|\to |\Ll|$ be the locally definable retraction introduced in {\sc Step 2}. Clearly, $\rho_0(f(Z_s))=f(Z_s)$, because $f(Z_s)\subset |\Ll|$. Thus,
\begin{align}
\begin{split}\label{surget}
f(Z_s)&=\rho_0(f(Z_s))=\rho_0(\psi_s(\psi_s^{-1}(f(Z_s))))=\rho_0(\psi_s(H_s(Z_s)))\\
&=\rho_0(\psi_s(H_s(\Gamma_s(Z_s^*))))=\rho_s(H_s(\Gamma_s(Z_s^*))).
\end{split}
\end{align}
By {\sc Step 8}, we have $\theta_r(x)=0$ for each $x\in Z_s^*$ and each $r\neq s$, because $U_r\cap Z_s^*=\varnothing$ for each $r\neq s$. In particular, $\theta_s(x)=1$ for each $x\in Z_s^*$. Thus,
\begin{align*}
g(x)&=\rho[h](x)=\sum_{x\in U_r\cap X}\theta_r(x)\rho_r\Big(\sum_{x\in U_r\cap X}\theta_r(x)H_r(\Gamma_r(x))\Big)\\
&=\theta_s(x)\rho_s(\theta_s(x)H_s(\Gamma_s(x)))=\rho_s(H_s(\Gamma_s(x)))
\end{align*}
for each $x\in Z_s^*$. Then, $g(Z_s^*)=\rho_s(H_s(\Gamma_s(Z_s^*)))$. By \eqref{surget}, we deduce $g(Z_s^*)=f(Z_s)$. Thus, as $\bigcup_{s\geq 1}Z_s^*\subset \bigcup_{s\geq 1}Z_s=X$ and $f(X)=|\Ll|$, finally, we conclude that:
\begin{align*}
|\Ll|=f(X)=f\Big(\bigcup_{s\geq 1}Z_s\Big)=\bigcup_{s\geq 1}f(Z_s)=\bigcup_{s\geq 1}g(Z_s^*)=g\Big(\bigcup_{s\geq 1}Z^*_s\Big)\subset g(X),
\end{align*}
as required.
\end{proof}

\subsection*{Acknowledgments} The author would like to thank Enrico Savi for valuable discussions during the preparation of this work. The author would also like to thank Riccardo Ghiloni who suggested us to use Paw\l{}ucki's desingularization techniques to investigate differentiable surjective approximation.

\bibliographystyle{amsalpha}

\end{document}